\documentclass[a4paper, 10pt]{article}
\usepackage[utf8x]{inputenc}
\usepackage{amsthm,amssymb, amsmath}
\usepackage{graphicx}
\usepackage{subfig}

\newtheorem{theorem}{Theorem}[section]
\newtheorem{definition}[theorem]{Definition}
\newtheorem{proposition}[theorem]{Proposition}

\newtheorem{corollary}[theorem]{Corollary}
\newtheorem{lemma}[theorem]{Lemma}

\newtheorem*{remark}{Remark}

\newcommand{\matr}[4]{
\left( \begin{array}{cc} #1 & #2 \\ #3 & #4 \end{array} \right)}

\newcommand{\RR}{\mathbb{R}}
\newcommand{\QQ}{\mathbb{Q}}
\newcommand{\ZZ}{\mathbb{Z}}
\newcommand{\NN}{\mathbb{N}}

\newcommand{\EE}{\mathcal{E}}
\newcommand{\BB}{\mathcal{B}}

\newcommand{\FF}{\mathcal{F}}
\newcommand{\RRR}{\mathcal{R}}

\title{The bifurcation locus for numbers of bounded type}
\author{ 
C. Carminati\thanks{Dipartimento di Matematica,
Universit\`a di Pisa, Largo Bruno Pontecorvo 5, I-56127, Italy,
email: {\tt carminat@dm.unipi.it}.}\and
G. Tiozzo\thanks{Department of Mathematics, University of Toronto, 40 St George St, Toronto ON, Canada, 
email: {\tt tiozzo@math.utoronto.ca}.}
}

\begin{document}

\maketitle

\begin{abstract}
We define a family $\BB(t)$ of compact subsets of the unit interval which 
provides a  filtration of the set of numbers 
whose continued fraction expansion has bounded digits. 
We study how the set $\BB(t)$ changes as  the parameter $t$ ranges in $[0,1]$, and 
see that the family undergoes period-doubling bifurcations and displays the same transition pattern from
periodic to chaotic behaviour as the family of real quadratic polynomials. 
The set $\EE$ of bifurcation parameters is a fractal set of measure zero and Hausdorff dimension $1$. 
The Hausdorff dimension of $\BB(t)$ varies continuously with the parameter, 
and we show that the dimension of each individual set equals the dimension of the corresponding section 
of the bifurcation set $\EE$.
\end{abstract}

\section{Introduction}
Every $x\in [0,1]\setminus \QQ$ can be encoded in a unique way by its
continued fraction expansion $$x=\cfrac{1}{a_1+\cfrac{1}{a_2+\cfrac{1}{a_3+ \ddots }}}=[0;a_1,a_2,a_3, ...],$$ where
$a_i\in \NN_+$ are called the {\em partial quotients} of $x$ . We say
that $x$ is a {\em number of bounded type} if its partial quotients are bounded.

More precisely, given $N\in \NN$ one can consider the set of numbers whose partial quotients are bounded by $N$:
\begin{equation}\label{eq:bt}
 \mathcal{B}_N:= \{ x = [0; a_1, \dots, a_k, \dots] \ : \ 1 \leq a_k \leq N \ \ \ \ \forall k \geq 1 \}.
\end{equation}

The sets $\BB_N$ have been studied by several authors (see \cite{He2} and the references therein for an account)
and their Hausdorff dimensions have been computed very precisely \cite{JP, JP2}. Moreover,
numbers of bounded type play a key role, not only in number theory, but also
in various contexts in dynamics, for instance in the theory of linearization
of analytic maps, and KAM theory.

Let us recall that the Gauss map, defined as $G(x) := \left\{ \frac{1}{x} \right\}$ for $x > 0$ and $G(0) := 0$,  acts as a shift operator 
on the continued fraction expansion, hence notice that $\mathcal{B}_N$ can be described in terms 
of the orbit for the Gauss map as

$$\mathcal{B}_N = \left\{ x \in [0, 1] \ : \ G^k(x) \geq \frac{1}{N+1} \ \ \ \ \forall k \in \NN \right\},$$
where in this paper $\NN = \{ 0, 1, 2, \dots \}$.
The sets $\BB_N$ give a filtration of the set of numbers of bounded type $\BB :=\bigcup_{N\geq 1} \BB_N$. 
More generally, if we consider sets of points whose orbit
stays above a certain fixed threshold $t$, we obtain a filtration of
$\BB$ with a continuous parameter which refines $\BB_N$:
\begin{definition}
Given $t \in [0,1]$, we define the set $\BB(t)$ of \emph{numbers of type bounded by }t as
\begin{equation}\label{eq:fareyway}
\begin{array}{l}
\BB(t):=\{x \in [0,1] \ : \ G^k(x) \geq t \ \ \ \forall k \in \NN\}.\\
\end{array}
\end{equation}
\end{definition}

Clearly, $\BB_N = \BB\left(\frac{1}{N+1} \right)$, and $\BB(t)
\subseteq \BB(t')$ if $t \geq t'$. The union of all $\BB(t)$ is
the set of all numbers of bounded type $\BB=\bigcup_{t>0}\BB(t)$.
Every $\BB(t)$ is a compact subset of the unit interval, and it is
nowhere dense of measure zero whenever $t > 0$.  

\medskip
\noindent {\bf Examples:}
\begin{enumerate}
\item Let $g:=[0;\overline{1}]=\cfrac{\sqrt{5}-1}{2}$, it is easy to
  see that $\BB(g)=\{g\}$, in fact $\BB(t)=\{g\} $ for all $t$ between
  $[0;\overline{2}]$ and $g$, while $\BB(t)=\emptyset$ for all $t>g$;
\item for $\alpha=[0;\overline{2,1}]$, $\BB(\alpha)=\BB_2=\BB(1/3)$ is
  the set of numbers whose continued fraction contains only the digits
  $1$ and $2$;
\item for $t=g^2=[0;2,\overline{1}]$ we get that $\BB(g^2)$ is the set
  of numbers whose continued fraction contains only the digits $1$ and
  $2$ and such that between any two $2$s there is an even number of
  $1$s. In formulas, $\BB(g^2) = A_0\cup \phi{A_0}$ with
  $A_0:=\{x=[0;X], \ \ X\in \{(2),(1,1)\}^\NN \}$ and
  $\phi(x):=1/(1+x)$.
\end{enumerate}

We will study how the set $\BB(t)$ changes as the parameter
$t$ varies. We will see that our family of sets undergoes period-doubling
bifurcations, and it displays the same transition pattern from
periodic to chaotic behaviour as the family of real quadratic
polynomials.

\medskip
\noindent \textbf{Topological bifurcations.} Let us define the \emph{bifurcation locus} of a function of the unit
interval as the complement of the set of points where the function is
locally constant. The first result (Section \ref{biflocus}) is the

\begin{proposition}\label{prima}
The bifurcation locus for the set-valued function $t \mapsto \BB(t)$ 
is exactly the set
\begin{equation}\label{eq:EE}
\EE := \{ x \in [0, 1] \ : \ G^k(x) \geq x \ \forall k \in \NN \}.
\end{equation}
\end{proposition}

The bifurcation set $\EE$ appears in several, seemingly unrelated
contexts: it was introduced in \cite{CT} to describe the set of phase
transitions of the entropy of $\alpha$-continued fraction
transformations, and it is essentially the same object as the spectrum
of recurrence quotients for cutting sequences of geodesics on the
torus studied by Cassaigne (\cite{Ca}, Theorem 1.1, see also Section \ref{Cassaigne}).

The set $\EE$ is also related to kneading sequences of unimodal maps:
indeed, in \cite{BCIT}  the authors observe that the set $\EE$ can be also expressed using the Farey map $F$
\begin{equation}\label{eq:F}
\EE = \{ x \in [0, 1] \ : \ F^k(x) \geq x \ \forall k \in \NN \}
\end{equation}
and they show that, by means of the conjugation of the Farey map with the tent map, one can build an explicit dictionary between $\EE$ and the set of external rays landing
on the real section of the boundary of the Mandelbrot set
$\mathcal{M}$.  In this sense, this paper complements the one just
mentioned, since our sets $\BB(t)$ are the formal analogue to the
family of Julia sets for real quadratic polynomials $p_c(z)=z^2 +
c$. More precisely, under the same dictionary as in \cite{BCIT}, one
can pass from each $t\in \EE$ to a definite point $c\in
\partial\mathcal{M}\cap \RR$, in such a way that $\BB(t)$ corresponds
to the set of external rays landing on the \emph{Hubbard tree} of the
corresponding real quadratic polynomial $p_c$ (see \cite{Ti-thesis}, Theorem 1.4).

The sets $\BB_N=\BB(1/(N+1))$ are Cantor sets for all $N\geq 2$,
but this is not always the case in the family $\BB(t)$, and indeed the topology 
changes wildly with the parameter. 

However, we are able to to describe each $\BB(t)$ by determining the connected components of its complement.
In order to do so, let $t \in (0, g]$, and define $\ell(t) := \min \left(  \EE \cap [t, 1] \right)$. We say that a rational $r$ is a \emph{$t$-label} 
if there exists $m \geq 1$ with  
$G^m(r)=0$
and 
$G^k(r) > \ell(t)$ for all  $0\leq k \leq m-1$. 
We shall denote by $\QQ_t$ the set of all $t$-labels and $S\cdot x$ the number whose continued fraction expansion is the concatenation of $S$ and the expansion of $x $  (see \textsection \ref{notation}).

\begin{proposition} \label{anatomyprop-intro}
For any $t \in [0, g]$, the connected components of the complement of $\BB(t)$ are 
in bijection with 
$t$-labels; indeed, we have 
$$\BB(t) = [0, 1] \setminus \bigcup_{r \in \mathbb{Q}_t} V_r$$
where 
$V_r:=(S_1 \cdot \ell(t), S_0 \cdot \ell(t))$ and $r$ has continued fraction expansions $r = [0; S_0] = [0; S_1]$, 
where $S_0$ and $S_1$ have, respectively, even and odd length.
Note that different $V_r$ for different $r \in \mathbb{Q}_t$ are disjoint.
\end{proposition}

Some special care is needed for $r = 0, 1$ (for details, see Section \ref{complementary}). For example for $t\leq \sqrt{2}-1$ the rational $r=1/2$ is a $t$-label, $S_0 = (1, 1)$, $S_1 = (2)$ and the 
endpoints of 
$V_{1/2}$ are $$\frac{1}{2+\ell(t)} \ \ \ \mbox{ and } \ \ \  \cfrac{1}{1+\cfrac{1}{1+\ell(t)}}.$$
As a consequence of Proposition \ref{anatomyprop-intro}, we will characterize the set of values such that $\BB(t)$ contains isolated points:

\begin{corollary} \label{isolated-intro}
The set $\BB(t)$ contains isolated points if and only if $\ell(t)$ is isolated in $\EE$.
\end{corollary}

In fact, isolated points of $\mathcal{E}$ can be completely characterized, and are the union of countably many \emph{period-doubling cascades} (see Proposition \ref{P:pd}). 

For instance, the first (largest) period-doubling cascade $(\alpha_n)_{n \geq 0}$ is given by 
$$\alpha_0 = [0;  \overline{1, 1}], \qquad \alpha_1 = [0;  \overline{2}], \qquad \alpha_2 = [0;  \overline{2, 1, 1}], \qquad \alpha_3 = [0;  \overline{2, 1, 1, 2, 2}], \dots$$
and their accumulation point is 
$c_F = [0; 2, 1, 1, 2, 2, 2, 1, 1, 2, 1, 1, \dots]$, whose continued fraction expansion is the unique fixed point of the 
substitution 
$$\left\{ \begin{array}{ll}
1 \mapsto 2 \\
2 \mapsto 2 1 1.
\end{array} \right.$$  

Finally, we prove that isolated points of $\mathcal{E}$
are exactly the points of
discontinuity for the function $t \mapsto \BB(t)$, where we consider the Hausdorff topology on compact sets:
  
\begin{corollary}
The points of discontinuity of the set-valued function
$$t \mapsto \BB(t)$$ in the Hausdorff topology are precisely the
isolated points of $\EE$.
\end{corollary}

\medskip
\noindent \textbf{Hausdorff dimension.}
Another interesting way to study the variation in structure of the family $\BB(t)$ consists in 
considering the Hausdorff dimension $\eta(t) := \textup{H.dim }\BB(t)$ as a function of the parameter $t$
(see Figure 1 for a picture). Such function is locally constant almost everywhere, and displays a
devil's staircase behaviour: it is a singular (but  continuous) function.

\begin{figure}[h]
  \centering
  \subfloat{\includegraphics[width=0.45\textwidth]{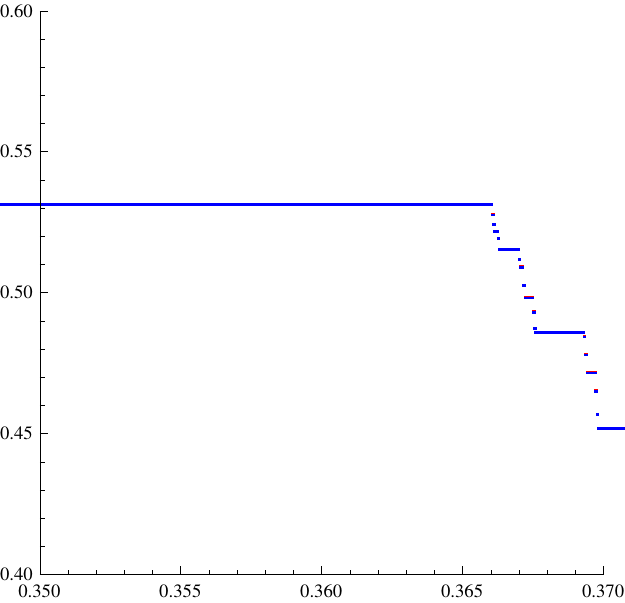}}
\quad
  \subfloat{\includegraphics[width=0.45\textwidth]{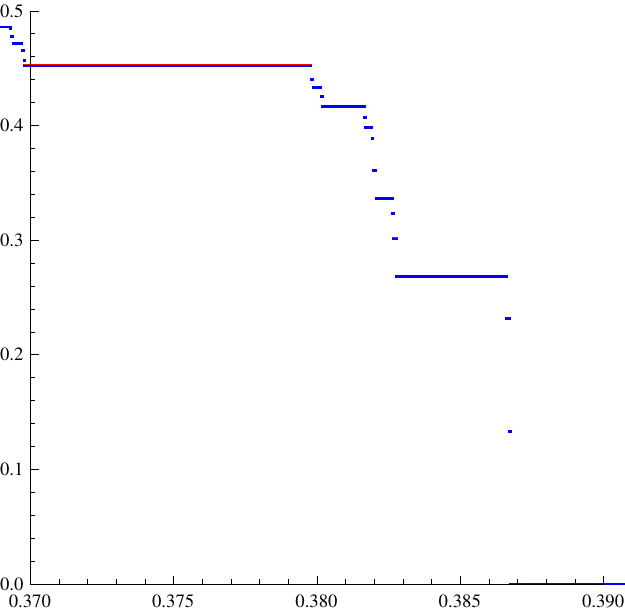}}
\caption{Hausdorff dimension of $\BB(t)$ for $0.35 \leq t \leq 0.37$ (left) and for $0.37 \leq t \leq 0.39$ (right).
The dimension drops to $0$ around the ``Feigenbaum value'' $c_F = 0.3867499707...$ (see Section \ref{extrat}).}
\end{figure}

Pushing further the comparison with Julia sets, let us recall that also in the quadratic polynomial case
one can consider the dependence of Hausdorff dimension of Julia sets on the parameter, 
or the dependence of topological entropy of unimodal maps on the kneading parameter.
In the latter case, which turns out to be much more similar to ours, entropy varies continuously, 
and its plateaux have been characterized in \cite{Do2}.

Moreover, we will relate the Hausdorff dimension of each $\BB(t)$ to the dimension
of a section of parameter space. To state the theorem, we define a rational number $r \in \mathbb{Q} \cap (0, 1]$ to be \emph{extremal} if there exists $n \geq 1$ such that $G^n(r) = 0$ and $G^k(r) > r$ for any $1 \leq k \leq n-1$. Given the two continued fraction expansions   $r = [0; S_0] = [0; S_1]$, we define the \emph{tuning window} (see \textsection \ref{tuning}) associated to $r$ as the interval 
$$W_r := [\omega, \alpha)$$
whose endpoints are the quadratic irrationals $\omega := [0; S_1 \overline{S_0}]$ and $\alpha := [0; \overline{S_0}]$.
For instance, if $r = \frac{1}{3} = [0; 3] = [0; 2,1]$, then $\omega = [0; 3, \overline{2, 1}] = \frac{5 - \sqrt{3}}{11}$, $\alpha = [0; \overline{2, 1}] = \frac{\sqrt{3}-1}{2}$.
\newline

\begin{theorem} \label{main}
\label{continuous}
Let $\eta(t):=  \textup{H.dim }\BB(t).$
Then:
\begin{enumerate}
\item
The function $\eta(t)$ is continuous.
\item
Denote $\EE(t) := \EE \cap [t, 1]$. Then for any $t \in [0,1]$ we have 
$$\textup{H.dim }\BB(t) = \textup{H.dim } \EE(t).$$
\item
The function $\eta(t)$ is constant on any tuning window $W_r$ with $r \in \mathbb{Q} \cap (0, c_F)$ extremal.
\end{enumerate}
\end{theorem}

As a corollary of (3), together with Proposition \ref{prima}, it follows that the bifurcation locus for the set-valued function $t \mapsto \BB(t)$ 
is strictly larger than the bifurcation locus for the dimension function $\eta(t)$. Indeed, the intersection of any tuning window $W_r$ with $\mathcal{E}$ is an uncountable set. 

We conjecture that the statement of Theorem \ref{main}(3) is sharp, in the sense that every interval where the dimension function $\eta(t)$ is constant is contained in the closure $\overline{W_r}$ of a tuning window with $r < c_F$. 

Theorem \ref{main} is formally similar to a widely known result of Urba{\'n}ski
\cite{Ur} for expanding circle maps. Our methods, however, are completely different: we  do  not use any thermodynamic formalism, but  a  renormalization scheme.

Note moreover that the geometry and combinatorics of the bifurcation locus is different, since in 
Urba{\'n}ski's case (in which the map is orientation preserving) the bifurcation locus for the set-valued function is a Cantor set and it coincides 
with the bifurcation locus for the dimension (see \cite{CT3}), while neither statement is true in our case
because of isolated points and tuning windows.

More generally, both of these cases are examples of open dynamical systems: we refer to \cite{PU} for several more 
recent developments in the field.

\medskip
\noindent \textbf{Tuning operators.} 
The first claim of Theorem \ref{continuous} follows quite easily from a general result of Raith \cite{Ra}, while the proof of 
the other two claims rely on renormalization techniques which were introduced in \cite{CT2}
and are illustrated in Section \ref{tuning}. 

Namely, given a rational number $r$, we will define a \emph{tuning map} $\tau_r$
which maps parameter space into a copy of itself; this tuning is
a natural translation, via the dictionary of \cite{BCIT}, of the classical
Douady-Hubbard tuning for quadratic polynomials (\cite{Do1}, \cite{DH}). 
Each tuning map $\tau_r$ maps the unit interval into the tuning window $W_r$; the set $UT$ of \emph{untuned parameters}
is the complement of the union of all tuning windows\footnote{Since tuning is the inverse of renormalization, untuned parameters 
can also be called \emph{non-renormalizable}.}.
The proof consists of three steps: 
\begin{enumerate}
\item Since the functions $x \mapsto \BB(x)$ and $x \mapsto \EE(x)$ are both locally constant on the complement on $\EE$, it is enough to prove 
$$\textup{H.dim }\BB(x) = \textup{H.dim }\EE(x) \quad \textup{for all }x \in \mathcal{E}.$$

\item We first consider the case (Section \ref{dominant}) when $x$ is an untuned parameter: in that case, one proves that there is almost an embedding of 
$\BB(x)$ in $\EE(x)$, or more precisely  

\begin{proposition} \label{Lipschitz}
Given any point $x \in UT$ and any $y > x$, then $\mathcal{E}(x)$
contains a Lipschitz image of $\mathcal{B}(y)$.
\end{proposition}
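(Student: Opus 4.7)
The plan is to construct a bi-Lipschitz embedding $\Phi : \BB(y) \hookrightarrow \EE(x)$ by prepending a carefully chosen finite continued-fraction word $W = W_1 \cdots W_n$: for $z = [0; z_1, z_2, \dots] \in \BB(y)$, set
$$\Phi(z) := [0; W_1, W_2, \dots, W_n, z_1, z_2, \dots].$$
This is the restriction of a single M\"obius transformation (built from the convergents of $W$) to the compact set $\BB(y) \subset [y,1]$, so the bi-Lipschitz property is automatic; all content lies in choosing $W$ so that $\Phi(z) \in \EE \cap [x,1]$.

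The word $W$ will be selected as the primitive period of a periodic point $p = [0;\overline{W}] \in \EE$ satisfying $x \le p < y$. The untuned hypothesis on $x$ is what guarantees the existence of such a $p$ with suitable combinatorial properties: if $x$ lay in the interior of a tuning window, then all elements of $\EE$ in a right-neighborhood of $x$ would inherit a rigid internal structure dictated by that window's combinatorics, preventing the choice of an ``unbordered'' period; non-renormalizability of $x$, in contrast, gives access to periodic orbits in $\EE \cap (x, y)$ whose periods are combinatorially generic in the strong sense needed below.

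Verification that $\Phi(z) \in \EE \cap [x, 1]$ then splits into three checks. First, for $k \ge n$ one has $G^k(\Phi(z)) = G^{k-n}(z) \ge y$, while $\Phi(z)$ lies in a small neighborhood of $p < y$; the slack $y - p > 0$ makes the inequality $G^k(\Phi(z)) \ge \Phi(z)$ strict. Second, for $0 < k < n$, one compares
$$[0; W_{k+1}, \dots, W_n, z_1, z_2, \dots] \quad \text{and} \quad [0; W_1, \dots, W_n, z_1, z_2, \dots]$$
lexicographically, according to the alternating continued-fraction order. Since $p \in \EE$ we have $[0;\sigma^k W, \overline{W}] \ge [0;\overline{W}]$; when $W$ is chosen \emph{unbordered} (no nontrivial proper prefix equal to a suffix), this comparison is settled strictly within the $W$-segment of both sides and therefore transfers verbatim to any tail, yielding $G^k(\Phi(z)) > \Phi(z)$. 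Third, $\Phi(z) \ge x$ follows because $\Phi$ maps $[y,1]$ into a small interval lying on the positive side of $p \ge x$.

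The main obstacle is producing $W$. The untuned hypothesis and the tuning-window machinery of section \ref{tuning} must be used to ensure that $\EE \cap [x, y)$ contains periodic orbits with unbordered primitive periods (or, slightly weaker, periods long enough that the comparison above is settled before reaching the tail); one expects ``untuned'' to be exactly the condition that makes a dense supply of such primitive periodic orbits available arbitrarily close to $x$ from above. Once $W$ is in hand, the three inequality checks and the M\"obius Lipschitz estimate are essentially mechanical.
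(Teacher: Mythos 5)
Your construction is the same as the paper's: prepend a finite word $W$ which is the period of a suitably ``combinatorially generic'' periodic point of $\EE$ lying in $[x,y)$, and check $G^k(W\cdot z)\geq W\cdot z$ separately for $k<|W|$ (a string comparison settled inside $W$) and $k\geq |W|$ (using $G^{k-|W|}(z)\geq y> W\cdot z$). Your ``unbordered period of a periodic point of $\EE$, with all comparisons settled strictly inside the period'' is essentially the paper's notion of a \emph{dominant} word ($XY<<Y$ for every splitting $S=XY$), and your step-by-step verification matches the proof of Proposition \ref{embedding}.

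The genuine gap is exactly the step you flag as ``the main obstacle'': you never prove that every untuned $x\neq g$ is accumulated from the right by points $[0;\overline{W}]\in\EE$ with $W$ unbordered/dominant. You only say one ``expects'' untunedness to provide such points. This is not a routine verification; it is the technical heart of the argument. The paper states it as Proposition \ref{densitydominant} and proves it in section \ref{proofdensity} by an induction on the length of the period of roots of maximal tuning windows, requiring Lemmas \ref{prefsuffisodd}--\ref{largerthanprim} (e.g.\ that prefixes-suffixes of extremal words are extremal of odd length, that $S^nT^m$ is dominant for suitable $T$ with $S<<T$ and $[0;\overline{T}]<[0;\overline{Z}]$ for all residual suffixes $Z$, and a comparison with the root $\alpha(Y_\star)$ of a maximal tuning window). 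Without this density statement your argument does not go through: an arbitrary periodic point of $\EE$ in $(x,y)$ need not be unbordered, and near a tuned parameter the available periods are forced to be concatenations of $S_0,S_1$ and are typically bordered. Two smaller points you should also fix: you must take $|W|$ even (as the paper builds into the definition of dominant), otherwise $f_W$ is decreasing and $\Phi(z)=W\cdot z\leq p$ may drop below $x$, and you must ensure $W$ is not a prefix of the expansion of $y$ so that $y$ lies to the right of the whole cylinder $I(W)$ and hence $G^{k}(\Phi(z))\geq y>\Phi(z)$ for $k\geq |W|$.
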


Equality of dimensions then follows from continuity (Theorem \ref{continuous}).

\item We proceed by induction, using renormalization. Indeed, we prove formulas which relate the dimensions of $\EE(x)$ and $\BB(x)$ 
to the dimensions of their tuned copies, and using the untuned case as the base step we prove equality of dimensions 
for all parameters which are finitely-renormalizable. Such parameters are dense in $\EE$, hence the general claim 
again follows from continuity. 
\end{enumerate}

Let us remark that the strategy outlined above can be applied in several related settings; in particular,
they have been applied, on the continued fraction side, to identify plateaux for the entropy of $\alpha$-continued fractions \cite{CT2}, 
while, on the real quadratic polynomial side, to prove an analogue of Theorem \ref{main} (2) about the real sections of the Mandelbrot set \cite{Ti2}.

The renormalization approach is also useful to study the modulus of continuity of $\eta(t)$, 
which turns out not to be H\"older-continuous at the fixed point $c_F$ of the tuning operator $\tau_{1/2}$, 
which is also the least zero value of $\eta(t)$
(see Section \ref{feigenbaum}). Indeed, we have 
the following precise scaling law:

\begin{theorem}\label{th:scaling}
There exist constants $c_1, c_2 > 0$ such that for any $t < c_F$
\begin{equation} \label{modulus}
\frac{c_1}{-\log |t - c_F|} \leq \textup{H.dim }\BB(t) \leq \frac{c_2}{-\log |t - c_F|}. 
\end{equation}
As a consequence, the function $\eta(t)$ is not locally H\"older continuous in a neighborhood of $c_F$.

Moreover, if $\alpha_n := \tau_{1/2}^n(\alpha)$ with $\alpha =[0;\overline{2,1} ]$, then 
\begin{equation} \label{scaling law-intro}
\lim_{n \to +\infty} \textup{H.dim }\BB(\alpha_n) \log\left(\frac{1}{|c_F-\alpha_n|}\right)= 5 \log \frac{\sqrt{5}+1}{2}.
\end{equation}
\end{theorem}
We also show (Section \ref{feigenbaum}) that one can not do better than \eqref{modulus}, since  the quantity 
$\textup{H.dim }\BB(t) \log\left(\frac{1}{|c_F-t|}\right)$
does not admit a limit for $t \to c_F^-$.

Note that an analogous estimate for the modulus of continuity holds for the topological entropy of real quadratic polynomials; 
in that case, the entropy function is also not H\"older continuous at the Feigenbaum point, even if it is H\"older continuous everywhere else (see \cite{Ti3}). 
We conjecture that the same property holds for $\eta(t)$, namely that it is locally H\"older on $[0, 1] \setminus \{ c_F \}$.

\medskip 
\noindent \textbf{The recurrence spectrum and other relations.} 
Finally, as we mentioned, the combinatorial structure of our sets is related to Sturmian sequences. In particular, as a consequence of Theorem \ref{main} we also provide an answer to a question raised in \cite{Ca} (see Section \ref{Cassaigne}):

\begin{theorem} \label{T:Cass}
Let $\mathcal{R}$ be the recurrence spectrum for Sturmian sequences. Then, for each positive integer $N$, 
the Hausdorff dimension of $\mathcal{R} \cap [N+2, N+3]$ equals the Hausdorff dimension of $\BB_{N}$.
\end{theorem}

The computation of Hausdorff dimension for the sections of the recurrence spectrum is thus reduced to 
estimating the dimension of $\BB_N$. Since the latter is a regular Cantor set, there are 
very efficient algorithms to compute its dimension (\cite{JP}, \cite{He1}), and an 
asymptotic estimate is available as well (\cite{He3}, \cite{DFSU}). 

\medskip

Our definition of $\EE$ is somewhat reminiscent of the definitions of the Markoff and Lagrange spectra, whose local 
Hausdorff dimension has been studied in \cite{Mo}. 
Indeed, all these objects are defined by taking an infimum over orbits of the Gauss map (see \cite{Mo}, Section 1); however, it turns out
these sets are in fact different both topologically and metrically and a characterization as explicit as ours does not seem easily achievable 
for the Markov and Lagrange spectra.  


\medskip
Let us finally recall that the set $\EE$ is also connected to the set $\mathcal{U}$ of univoque numbers introduced in \cite{EHJ}, namely the values $q\in(1,2)$ for which $1$ admits a unique expansion in base $q$. More precisely there is an explicit order reversing homeomorphism between $\mathcal{U}$  and the  points of $\EE$  which are not periodic for $G$ (see  \cite{BCIT} \textsection 5.2); thus the value $c_F$, which is the largest non-isolated point of $\EE$ corresponds to the Komornik-Loreti constant $q_{KL}$ (see \cite{KL}), which is the smallest value in $\mathcal{U}$. Moreover, given a fixed base $q\in (1,2)$ one can consider the set $\mathcal{U}_q$ of numbers with a unique $q$-expansion, and this family of sets corresponds under the dictionary of \cite{BCIT} to the family $\BB(t)$; in fact, after the first draft of the present paper appeared, many combinatorial, topological or metric issues we have examined for $\BB(t)$ have been independently studied for $\mathcal{U}_q$ as well.

For instance, the continuity of the map $q\mapsto \textup{H.dim } \mathcal{U}_q$ has been addressed in the recent paper \cite{AK1}.
Moreover, entropy plateaus for $q$-expansions are described in \cite{ABD} and renormalization operators are used in \cite{AK2} and \cite{A}. Further, an analogue to Theorem \ref{main}(2) is proven for univoque numbers in \cite{KKLL}.

Sets with the same features as $\EE$ occur also in bifurcation problems for piecewise affine maps: we just mention the recent work \cite{DK}, but also \cite{BCMP}, where one can observe the very same structure of $\EE$ in every tuning window (see  \cite{BCMP}, Remark 4.29).
 
\medskip
\noindent \textbf{Acknowledgements.} 
We thank the anonymous referee for their detailed comments which improved the exposition.
G.T. is partially supported by NSERC and the Alfred P. Sloan Foundation. C.C. is partially supported by the research project PRIN 2017S35EHN\_004 ``Regular and stochastic behaviour in dynamical systems" of the Italian Ministry of Education and Research and by the GNAMPA group of the ``Istituto Nazionale di Alta Matematica" (INdAM).

\section{Preliminaries} 

\subsection{String action, cylinders, and regular Cantor sets} \label{notation}

The continued fraction expansion of a number $x$
will be denoted by $x = [0; a_1, a_2, \dots]$, or by $x = [0; S]$ where $S =  (a_1, a_2, \dots)$ is a (finite or infinite) 
string of positive integers.
Let us note that irrational numbers have infinite continued fraction expansions, and every rational number $r \in \mathbb{Q} \cap (0,1)$ has
exactly two finite expansions, one of even length and one of odd length; we will 
usually denote by $S_0$ the string of even length, and by $S_1$ the string of odd length ($r = [0; S_0]= [0; S_1]$).
Moreover, the expression $x = [0; \overline{S}]$ will denote the quadratic irrational whose continued fraction expansion
is periodic, given by $S$ repeated infinitely many times.
The length of the string $S$ will be denoted by $|S|$.

If $S$ is a finite string of positive integers we will denote by $q(S)$ the denominator 
of the rational number given whose c.f. expansion is $S$, i.e. $\frac{p(S)}{q(S)} = [0; S]$ with $(p(S), q(S)) = 1$, $q(S) > 0$. 

Let us also recall the well known estimate 
\begin{equation}\label{eq:supermult}
q(S)q(T) \leq q(ST)\leq 2q(S)q(T).
\end{equation}

Moreover, we can define an action of the semigroup of finite strings (with the operation of concatenation)
on the unit interval. Indeed, for each $S$, $S \cdot x$ is the number obtained by appending 
the string $S$ at the beginning of the continued fraction expansion of $x$. We shall also use the notation $f_S(x) := S \cdot x$.
If $S=(a_1,...,a_n)$ we can also write, by identifying matrices with M\"obius transformations, 

$$ S \cdot x := \matr{0}{ 1}{ 1}{ a_1} \matr{0}{ 1}{ 1}{ a_2}...  \matr{0}{
  1}{ 1}{ a_n} \cdot x.$$ 
It is easy to check that concatenation of
strings corresponds to composition  
$ (ST)\cdot x = S
\cdot(T\cdot x)$; moreover, the map $f_S$ is increasing
if $|S|$ is even, decreasing if it is odd.
The image of $f_S$ is a \emph{cylinder set} 
$$I(S) := \{ x = S \cdot y,\ y \in [0, 1] \},$$ 
which is a closed interval with endpoints
$[0; a_1, \dots, a_n]$ and $[0; a_1, \dots, a_n +1]$. 
The map $f_S$ is a contraction of the unit interval, and it is easy to see that 
\begin{equation} \label{contraction}
\frac{1}{2 q(S)^2} \leq |f'_S(x)| \leq \frac{1}{q(S)^2} \qquad \qquad \forall x \in [0,1],
\end{equation}
hence the length of $I(S)$ is bounded by
\begin{equation} \label{cylsize}
\frac{1}{2 q(S)^2} \leq |I(S)| \leq \frac{1}{q(S)^2}.
\end{equation}

Given two strings of positive integers $S = (a_1, \dots, a_n)$ and $T = (b_1, \dots, b_n)$ of equal length, let us define the 
\emph{alternate lexicographic order} as
$$S < T \ \textup{if } \exists k\leq n \textup{ s.t. } a_i = b_i \ \forall 1 \leq i \leq {k-1} \textup{ and } \left\{ \begin{array}{ll} a_n < b_n & \textup{if }n\textup{ even} \\
                                                                                                                       a_n > b_n & \textup{if }n\textup{ odd.} 
                                                                                                                      \end{array} \right.$$
For equal length strings we shall also write $S\leq T$ if either $S<T$ or $S=T$. 
The importance of such order lies in the fact that (given two strings of equal length) $S < T \textup{ iff } [0; S] < [0; T]$. 
Moreover, in order to compare quadratic irrationals with periodic expansion, we shall repeatedly use the so called \emph{string lemma}
\begin{lemma}[\cite{CT}, Lemma 2.12]\label{stringlemma} 
For any pair of strings
$S$, $T$ of positive integers, 
\begin{equation} 
ST < TS \Leftrightarrow [0; \overline{S}] < [0; \overline{T}].
\end{equation}
\end{lemma}

The order $<$ is a total order on the strings of positive integers of fixed given length; to be able to compare strings of different lengths 
we will define the partial order 

$$S  < <   T \quad \textup{if }\exists i \leq \min\{|S|, |T|\} \textup{ s.t. }(S)_1^i < (T)_1^i$$
where $(S)_1^i = (a_1, \dots, a_i)$ denotes the truncation of $S$ to the first $i$ characters. 
Let us note that:

\begin{enumerate}
\item If $|S| = |T|$, then $S < T$ iff $S  < <   T$.
\item If $S, T, U$ are any strings, $S  < <   T \Rightarrow SU  < <   T, S  < <   TU$. 
\item If $S  < <   T$, then $S \cdot z < T \cdot w$ for any $z, w \in (0, 1)$.
\end{enumerate}

In general, the computation of the Hausdorff dimension of a compact subset of the interval is a non-trivial task: 
however, there is a family of sets whose dimension we can estimate easily, namely sets of numbers 
whose continued fraction expansion is given by concatenations of words of some finite alphabet:

\begin{definition} \label{RCS}
Given a finite set of strings $\mathcal{A}$, the \emph{regular Cantor set} defined by $\mathcal{A}$ is the set
$$K(\mathcal{A}) := \{ x = [0; W_1 W_2 \dots ] \ : \ W_i \in \mathcal{A} \ \forall i \geq 1\}.$$
\end{definition}

A regular Cantor set is a particular case of an {\em iterated function system} and its Hausdorff dimension can be estimated in a
standard way. Let us assume that the alphabet $\mathcal{A}$ is not redundant, in the sense
that for each $W_i$ and $W_j$ distinct elements of $\mathcal{A}$ we have $W_i\cdot K(\mathcal{A}) \cap W_j\cdot K(\mathcal{A})=\emptyset$.
Now, let $m_j := \inf_{x \in [0, 1]} |f_{W_j}'(x)|$ and $M_j := \sup_{x \in [0, 1]}  |f_{W_j}'(x)|$
be respectively the smallest and largest contraction factors of the
maps $f_{W_j}$.
Then we have the estimates (\cite[Proposition 9.6, 9.7]{Fa})
\begin{equation} \label{dimbounds}
s_1 \leq \textup{H.dim } K(\mathcal{A}) \leq s_2
\end{equation}
where $s_1$ is the root of $ \sum_j m_j^s=1$ and $s_2$ is the root of $ \sum_j M_j^s=1$.
Note that the upper bound also holds without the disjointness assumption.

\subsection{Generalized bounded type numbers and their bifurcation locus} \label{biflocus}

Let us start by proving some elementary facts, which will yield Proposition \ref{prima}
stated in the introduction, namely that the bifurcation locus of the family $\BB(t)$ 
is the \emph{exceptional set}

$$\EE := \{ x\in [0, 1] \ : \ G^k(x) \geq x \  \forall k \in \mathbb{N} \}.$$

\begin{lemma}\label{B}
The sets $\BB(t)$ have the following properties:
\begin{enumerate}
\item[(i)] $\BB(0)=[0,1]$;  $\BB(t)=\emptyset$ if $t>g = \frac{\sqrt{5}-1}{2}$, in fact
$t \mapsto \BB(t)$ is monotone decreasing;
\item[(ii)]
$\BB(t)$ is forward-invariant for the Gauss map $G$;
\item[(iii)]
$\BB(t)$ is closed and, if $t>0$, with no interior and of zero Lebesgue measure;
\item[(iv)]
the union $\bigcup_{t > 0} \BB (t)$ is the set of bounded type numbers;
\item[(v)]
$\bigcap_{t'<t}\BB(t')=\BB(t)$;
\item[(vi)]
$\EE=\{t \in [0,1] \ : \ t \in \BB(t)\}.$
\end{enumerate}
\end{lemma}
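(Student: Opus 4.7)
The plan is to prove the six items essentially in order; five of them reduce in a few lines to the definitions, and the real content lies in item (iii) together with the emptiness assertion in (i). For (i), monotonicity is evident from the definition and $\BB(0)=[0,1]$ is trivial, while the bound by $g$ I would obtain by contradiction: if $x\in\BB(t)$ with $t>g$, then $x\geq t>g$ forces $a_1=1$ (since $1/x<1/g<2$), and then $G(x)=1/x-1\geq t$ forces $x\leq 1/(1+t)<1/(1+g)=g<t$, contradicting $x\geq t$. Item (ii) is immediate from $G^{k+1}=G^k\circ G$. Item (vi) is a tautology: $t\in\BB(t)$ unpacks to exactly the defining condition $G^k(t)\geq t$ for all $k$ of the set $\EE$.

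Item (iii) is the main obstacle. The key observation is that, for $t>0$, any $x\in\BB(t)$ has all partial quotients bounded by $M:=\lfloor 1/t\rfloor$, since $a_{k+1}=\lfloor 1/G^k(x)\rfloor\leq 1/t$. With this uniform bound I can rewrite
\[
\BB(t)=\bigcap_{k\geq 0} K_k,\qquad K_k:=\bigcup_{w\in\{1,\ldots,M\}^k}f_w([t,1]),
\]
and each $K_k$ is a \emph{finite} union of closed cylinders, hence closed; this yields closedness of $\BB(t)$. The empty-interior claim then follows from the fact that rational numbers have eventually vanishing Gauss orbit and therefore do not lie in $\BB(t)$ when $t>0$, so $\BB(t)$ misses the dense set $\QQ\cap[0,1]$ and thus contains no nonempty open set. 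Measure zero follows from ergodicity of the Gauss map with respect to the Gauss measure: for Lebesgue-a.e.\ $x$ the orbit $\{G^k(x)\}_k$ is dense in $[0,1]$, so in particular enters $[0,t)$.

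Items (iv) and (v) are routine. For (iv), if the digits of $x$ are bounded by $N$ then $G^k(x)\geq 1/(N+1)$, so $x\in\BB(1/(N+1))$, while conversely the bound from (iii) places any element of $\bigcup_{t>0}\BB(t)$ in some $\BB_N$. For (v), the inclusion $\BB(t)\subseteq\bigcap_{t'<t}\BB(t')$ is monotonicity, and the reverse follows by taking $t'\nearrow t$ in the inequalities $G^k(x)\geq t'$. The only nontrivial ingredients in the whole argument are the uniform boundedness of partial quotients used to get the finite combinatorial description of $\BB(t)$, and the ergodic-theoretic input for the measure-zero statement.
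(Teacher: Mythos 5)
Your proof is correct, and for the one item with real content, closedness in (iii), you take a genuinely different route from the paper. The paper's trick is to replace the (discontinuous) Gauss map by the continuous Farey map $F$, observe that $\inf_{k\ge 1}G^k(x)=\inf_{k\ge 1}F^k(x)$ because the intermediate points $[0;a_1-j,a_2,\dots]$ of the slow orbit only exceed the corresponding fast-orbit points, and then write $\BB(t)=\bigcap_k (F^k)^{-1}([t,1])$, which is closed by continuity; this works uniformly for all $t$, including $t=0$. You instead exploit the uniform digit bound $M=\lfloor 1/t\rfloor$ (available only for $t>0$, which is all that is claimed) to express $\BB(t)$ as a nested intersection of finite unions of closed intervals $f_w([t,1])$. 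Both arguments are sound, and yours has the side benefit of making the finite combinatorial structure of $\BB(t)$ explicit. The one step you should not gloss over is the inclusion $\bigcap_k K_k\subseteq\BB(t)$: membership $x\in f_w([t,1])$ does not by itself say that $w$ is a prefix of the continued fraction of $x$, since $x$ could be an endpoint of the cylinder $I(w)$. The fix is short --- a rational number lies in only finitely many cylinders altogether, so it is eventually excluded from the $K_k$, while an irrational point of $I(w)$ lies in its interior and hence does have $w$ as a prefix, giving $G^{|w|}(x)=f_w^{-1}(x)\in[t,1]$ --- but it should be said. Your explicit verification of $\BB(t)=\emptyset$ for $t>g$ (via $a_1=1$ and $x\le 1/(1+t)<1/(1+g)=g$) and your treatment of the remaining items match what the paper dismisses as immediate, and are correct.
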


\begin{proof}
Points (i), (ii), (iv), (v), (vi) are immediate by definition. \newline
{\bf (iii)} 
Let us consider the Farey map $F : [0, 1] \rightarrow [0,1]$, defined by
$$F(x) := \left\{ \begin{array}{ll} 
                  \frac{x}{1-x} & \textup{if }0 \leq x \leq \frac{1}{2} \\
		  \frac{1-x}{x} & \textup{ if }\frac{1}{2} < x \leq 1.
                 \end{array} \right.$$

One can easily check that if $x:=[0;a_1,a_2,a_3,...]$ then $F(x)=[0;a_1-1,a_2,a_3,...]$ if $a_1>1$ (while 
  $F(x)=[0;a_2,a_3,...]$ in the case $x:=[0;1,a_2,a_3,...]$) and so it is clear that
 for each $x \in [0,1]$
$$\inf_{k \geq 1} G^k(x) = \inf_{k \geq 1} F^k(x).$$
Therefore one can write
\begin{equation}\label{eq:BBF}
\BB(t) = \{ x \in [0,1] \ : \ F^k(x) \geq t \ \forall k \in \NN \},
\end{equation}
which is closed by continuity of $F$. For $t >0$, $\BB(t)$ has no interior because 
it does not contain any rational number, and it has measure zero by ergodicity of 
the Gauss map.
\end{proof}

Since $\BB(t)$ is a non-empty compact set for $0 \leq t \leq g$, for such values of $t$ one can define the function
$$\ell(t) := \min \BB (t).$$
We shall list some elementary properties of $\ell$.

\begin{lemma}\label{ell}
The function $t \mapsto \ell(t)$ is monotone increasing and  
\begin{enumerate}
\item[(i)]
For any $0 \leq t \leq g$, $\ell(t) \in \EE$;
\item[(ii)]
$t \leq \ell(t) \ \forall t\in [0,g]$;
\item[(iii)]
$t=\ell(t) \iff t\in \EE$; 
\item[(iv)] $\ell$ is left-continuous: $\ell(t)=\sup_{t'<t}\ell(t')= \lim_{t'\to t^-} \ell(t')$;
\item[(v)]
if $(\alpha,\beta)$ is a connected component of $[0,g]\setminus \EE$ then
$$
\begin{array}{c}
\ell(t)=\beta\\
\BB(t)=\BB(\beta)
\end{array}
\ \ \ \ \forall t \in (\alpha, \beta].
$$
\end{enumerate}

\end{lemma}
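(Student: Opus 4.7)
The plan is to dispatch each item in turn, using the forward-invariance of $\BB(t)$ and the continuity property (v) of Lemma \ref{B} as the main tools. Monotonicity of $\ell$ is immediate: if $t \leq t'$ then $\BB(t) \supseteq \BB(t')$ by Lemma \ref{B}(i), so the minimum cannot decrease.

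For (i)--(iii) the idea is just to unwind definitions. Point (ii) is the tautology $\ell(t) \in \BB(t) \Rightarrow \ell(t) \geq t$. For (i), observe that by the forward-invariance of $\BB(t)$ under $G$ (Lemma \ref{B}(ii)), the entire orbit $\{G^k(\ell(t))\}_{k \geq 0}$ lies in $\BB(t)$, and hence each iterate is $\geq \ell(t)$ by minimality; this is precisely the condition $\ell(t) \in \EE$. For (iii), combine (i) and (ii) with Lemma \ref{B}(vi): if $t \in \EE$ then $t \in \BB(t)$, so $\ell(t) \leq t$, which together with (ii) forces equality; conversely if $\ell(t) = t$ then $t \in \BB(t)$, so $t \in \EE$.

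For (iv), the left limit $L := \lim_{t' \to t^-} \ell(t')$ exists by monotonicity and satisfies $L \leq \ell(t)$. For the reverse inequality, pick a sequence $t_n \uparrow t$ with $\ell(t_n) \to L$; for every fixed $n$, the tail $\{\ell(t_m)\}_{m \geq n}$ lies in the closed set $\BB(t_n)$ (nested, by monotonicity), so $L \in \BB(t_n)$. Thus $L \in \bigcap_{n} \BB(t_n) = \BB(t)$ by Lemma \ref{B}(v), giving $L \geq \ell(t)$.

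Step (v) is the part that ties everything together and is where I expect the only real thought is required. For $t \in (\alpha, \beta]$, monotonicity plus (iii) applied at $\beta \in \overline{\EE} = \EE$ gives $\ell(t) \leq \ell(\beta) = \beta$. For the opposite inequality we combine (i) and (ii): $\ell(t) \in \EE$ and $\ell(t) \geq t > \alpha$, so since $(\alpha,\beta)$ avoids $\EE$ we must have $\ell(t) \geq \beta$; hence $\ell(t) = \beta$. The equality $\BB(t) = \BB(\beta)$ then follows from a one-line use of forward-invariance: any $x \in \BB(t)$ has its whole $G$-orbit in $\BB(t)$, so each $G^k(x) \geq \ell(t) = \beta$, meaning $x \in \BB(\beta)$; the reverse inclusion $\BB(\beta) \subseteq \BB(t)$ is trivial from $t \leq \beta$. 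The only subtlety worth flagging is the appeal to $\beta \in \EE$, which holds because $\EE$ is closed (an intersection of closed conditions $G^k(x) \geq x$) and $\beta$ is an endpoint of a complementary component.
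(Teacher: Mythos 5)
Your proof is correct and follows essentially the same route as the paper's: items (i)--(iii) by unwinding definitions and Lemma \ref{B}(vi), item (iv) from $\bigcap_{t'<t}\BB(t')=\BB(t)$, and item (v) by squeezing $\ell(t)$ between $\beta$ (using $\beta\in\EE$) and the fact that $\ell(t)\in\EE\cap(\alpha,1]$ cannot land in $(\alpha,\beta)$, then deducing $\BB(t)=\BB(\beta)$ from forward-invariance. The only cosmetic difference is that you obtain $\ell(t)\le\beta$ via monotonicity of $\ell$ and (iii) at $\beta$, where the paper argues directly from $\beta\in\BB(\beta)\subseteq\BB(t)$; these are interchangeable.
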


\begin{proof}
(i): Since $\ell(t) \in \BB(t)$, $G^n(\ell(t)) \geq \ell(t)$, hence $\ell(t) \in \EE$.
(ii): $ x \in \BB(t) \Rightarrow x \geq t$, hence $\ell(t) \geq t$.
(iii) is a consequence of Lemma  \ref{B}-(vi): 
$$ t=\ell(t) \  \iff \ t \in \BB(t) \ \iff \ t \in \EE.$$
(iv) follows from Lemma \ref{B}-(v).
(v): let us pick $t$ such that $\alpha<t<\beta$. Since 
$(\alpha, \beta)$ is a connected component of $[0,g]\setminus
  \EE$ we have $\beta \in \EE$ and so $\beta \in \BB( \beta)\subset\BB(
  t)$,  and
$$\beta\geq \min \BB(t)=\ell(t).$$
On the other hand, since $(\alpha,\beta)\cap \EE=\emptyset$ 
and $\ell(t)\in \EE\cap [t,1]$ 
it follows that
$$\ell(t)\geq \beta.$$
We have thus proved that $\ell(t)=\beta$.
Now, from (ii) and monotonicity, $\BB(\ell(t)) \subseteq
 \BB(t)$. Moreover, if $x \in \BB(t)$, by $G$-invariance $G^n(x) \in
\BB(t)$, hence $G^n(x) \geq \ell(t)$ and $x \in \BB(\ell(t))$, hence
$\BB(t) = \BB(\ell(t)) = \BB(\beta)$.
\end{proof}
From Lemma \ref{ell} it also follows that
$$\ell(t) = \min(\EE \cap [t, 1]).$$

\textit{Proof of Proposition \ref{prima}}. By Lemma \ref{ell}-(v), 
the function $t \mapsto \BB(t)$ is locally constant outside $\EE$.
On the other hand, if $t \in \EE$, then $t \in \BB(t)$ by definition, but 
$t \notin \BB(t')$ for any $t'>t$, so $t$ must belong to the bifurcation set. \qed
 
\section{A complementary point of view} \label{complementary}

If one wants to figure out the structure of a closed, nowhere dense set $C
\subset [0,1]$ (even just in order to plot a picture), one usually
tries to describe its complement: indeed, while $C$ is typically
uncountable and totally disconnected, its complement $A:=[0,1] \setminus C$ is a
countable union of open connected components (intervals): $$ A= \bigcup_{j\in \NN}I_j.$$ 
We recall the following simple fact (\cite{CT}, Lemma 2.1):

\textit{If $(a,b) \subset [0,1]$ there is a {\bf unique} rational number $p/q \in (a,b)\cap
\QQ$ with minimal denominator, i.e. such that  $q\leq q'$ for all other $p'/q' \in (a,b)\cap \QQ$.
We shall call such a ``minimal" rational the {\em pseudocenter} of the interval  
$(a,b)$.}

The above fact provides a canonical way of labelling the
connected components of $[0,1]\setminus C$; in fact if $C$ is a closed, possibly fractal
set of irrational numbers which is defined imposing a certain
condition on the continued fraction of its elements, it often happens
that the pseudocenters of the connected components of $[0,1]\setminus C$ are
rational values which satisfy, at a finite stage, the same property
which defines $C$: this is known to be true for diophantine-type
conditions arising in the solution of cohomological equations \cite{MS}, 
for the sublevels of the Brjuno function \cite{CM} and -indeed- 
also for $\EE$ and $\BB (t)$, as we shall see in a moment.

\subsection{Extremal rational numbers and the exceptional set} \label{extrat}

The bifurcation set $\EE$ is a totally disconnected, nowhere dense compact set. 
An explicit description of the connected components of its complement was first given in \cite{CT}.  
In this section we shall describe this construction, and we refer to \textsection 3.1 of \cite{BCIT} 
for more details.

Let us recall that any $r\in (0,1) \cap \QQ$ admits two continued fraction expansions, one of even and one of odd length (for instance $3/10=[0; 3, 3]=[0;3,2,1]$). Using the string notation introduced in \textsection \ref{notation} we can write $r = [0; S_0] = [0; S_1]$ with $|S_0|$ even and $|S_1|$ odd;  the two strings $S_0, S_1$ defining the same rational value $r\in(0,1)$ are called {\it conjugate strings}, and one has $S_1 < <  S_0$. 

For any rational value $0 < r < 1$ we then define the {\em quadratic interval} of pseudocenter $r$ as $I_r := (\alpha^-, \alpha^+)$ where $\alpha^- = [0; \overline{S_1}]$, 
$\alpha^+ = [0; \overline{S_0}]$; we also define $I_r$ for $r=1$ setting  $I_1 := (g, 1]$. It turns out that any such quadratic interval is contained in some maximal one (\cite{CT}, Lemma 2.5), and {\em maximal quadratic intervals} are the connected components of $[0,1]\setminus \EE$.

Let us recall the following characterization of the pseudocenters of maximal quadratic intervals. 

\begin{proposition}[\cite{CT}, Proposition 4.5]\label{P45}
Let $r\in(0,1)$ be a rational number. Then $r=[0;S]$ is the pseudocenter of a maximal quadratic interval iff for every nontrivial factorization $S=XY$ either $XY<YX$ or $X=Y$ and $|X|$ is odd.
Moreover, if one of the conditions above is true, then for every factorization $S=XY$ it happens that $r=[0;S]<[0;Y]$.
\end{proposition}

Let us now give a couple of definitions which can be used to provide alternative characterizations of the pseudocenters.

\begin{definition}
A rational number $r \in \mathbb{Q} \cap (0,1]$ is called \emph{extremal} if there exists $n \geq 1$ such that $G^n(r) = 0$ and 
$G^k(r) > r$ for any $1 \leq k \leq n-1$.
\end{definition}

The continued fraction expansions of extremal rational numbers have a particular structure, namely they are extremal in the following sense.

\begin{definition} \label{D:ext-string}
A finite string of positive integers $S$ is called \emph{extremal} if 
$$XY < YX$$ 
for every splitting $S = XY$ where $X$, $Y$ are nonempty strings.
\end{definition}

The following lemma puts together all the above results and definitions.

\begin{lemma}\label{P45CT1}
Let $r \in \mathbb{Q} \cap (0,1]$ be a rational number. Then the following conditions are equivalent:
\begin{itemize}
\item[(i)] $r$ is extremal;
\item[(ii)] the interval $I_r$  is a maximal quadratic interval;
\item[(iii)] if  $r = [0; S_1]$ is the continued fraction expansion of odd length of $r$, then $S_1$ is extremal.
\end{itemize}
Moreover, if any of these conditions holds and $r = [0; S_0]$ is the continued fraction expansion of even length of $r$, then 
for all splittings $S_0 = XY$ one has$$XY \leq YX.$$
\end{lemma}

\begin{proof}
If $r=1$ there is hardly anything to prove (note that we consider $r=1$ extremal as well, and $S=(1)$ is extremal for free). For $r\in (0,1)$ it is immediate to check that Proposition \ref{P45} gives the implications $(ii) \iff (iii) \Rightarrow (i)$, hence we only need to prove $(i) \Rightarrow (ii)$. 


To do so, 
suppose that $r \in \mathbb{Q}$ is such that $G^k(r) > r$ for $1 \leq k \leq n-1$ and $G^n(r) = 0$. 
Let $(\alpha^-, \alpha^+)$ be the complementary interval of $\EE$ where $r$ lies, and suppose the pseudocenter of such a component is 
$s := [0; Z_0] = [0; Z_1]$, so that $\alpha^- = [0; \overline{Z_1}]$ and
$\alpha^+ = [0; \overline{Z_0}]$. 
Then either $r = Z_0 \cdot r'$ or $r = Z_1 \cdot r'$ for some $r' \geq 0$.  
In the first case, if $k = |Z_0|$ then 
$G^k([s, \alpha^+)) = [0, \alpha^+)$.
Hence, since $G^k$ is expanding, $r' = G^k(r) < r$. 
In the second case, if $k = |Z_1|$ then $G^k((\alpha^-, s]) = [0, \alpha^-)$ and $G^k$ is orientation reversing, hence $r' = G^k(r) < r$.
In both cases, the fact that $r$ is extremal implies $r' = 0$, so $r = s$ is the pseudocenter of $(\alpha^-, \alpha^+)$, hence $I_r = I_s$ is maximal.

The last claim follows immediately from Proposition \ref{P45}.
\end{proof}

As a consequence of this discussion, there is a bijection between extremal rational numbers and maximal quadratic intervals; therefore the set

\begin{equation}\label{eq:QE}
\mathbb{Q}_E = \{ r \in \mathbb{Q} \cap (0, 1], \ r \textup{ extremal} \} 
\end{equation}
allows us to label all maximal quadratic intervals, which are the connected components of $[0,1]\setminus \EE$. Thus we have shown that the complement of $\EE$ can be expressed as a disjoint union
\begin{equation} \label{complementE}
[0, 1] \setminus \EE := \bigcup_{r \in \mathbb{Q}_E} I_r.
\end{equation}


It is also possible to describe the topology of $\EE$ in detail, and more precisely identify its isolated points. 
Indeed, if $r = [0; S_1]$ is extremal, then $r_1 = [0; S_1 S_0]$ is also extremal (\cite{CT}, Proposition 3.9), and 
$\EE \cap [r_1, r]$ consists of the single,  isolated point $\alpha_1 = [0; \overline{S_1}]$. All isolated points are obtained in this way, and come grouped in \emph{period-doubling cascades} since we can reiterate this scheme as we explain below.

Let us denote by $Z'$ the conjugate string of $Z$ (so that $[0; Z'] = [0; Z]$). 

\begin{proposition}[\cite{BCIT}] \label{P:pd}
Let $r = [0; S_1]$ be an extremal rational number, with $S_1$ its continued fraction of odd length. Define the sequence
$$
\begin{cases}
Z_1=S_1\\
Z_{n+1}=Z_n Z'_n, \ \ (n\geq 1).
\end{cases}
$$
Then all the values $\alpha_n:=[0;\overline{Z_n}]$ belong to $\EE$ and form a decreasing sequence converging to a point $\alpha_\infty \in \EE$  which has an aperiodic continued fraction expansion. In fact, if $\alpha^+ = [0; \overline{S_0}]$, then $\EE \cap (\alpha_\infty, \alpha^+)$ is precisely the discrete set $\{\alpha_n, \ n\geq 1\}$, which we call the \emph{period-doubling cascade} associated to $r$.

Finally, every isolated point of $\mathcal{E}$ belongs to some period-doubling cascade.
\end{proposition}
 
For instance, by starting with $r = \frac{1}{2} = [0; 2] = [0; 1, 1]$, one gets the sequence 
$$[0; \overline{2}], [0; \overline{2, 1, 1}], [0; \overline{2, 1, 1, 2, 2}], [0; \overline{2, 1, 1, 2, 2, 2, 1, 1, 2, 1, 1}], \dots $$
which tends to the point $c_F = 0.3867499707...$. Notice that in the dictionary of \cite{BCIT}, this point corresponds to 
the Feigenbaum parameter for unimodal maps (hence the notation $c_F$). 
It is also easy to describe the sequence of strings $Z_n$ in terms of the conjugate string $S_0$ and $S_1$:
$$  {\tiny S_1, \ \  S_1S_0, \ \ S_1S_0S_1S_1, \ \ S_1S_0S_1S_1S_1S_0S_1S_0,  \ \ \dots}$$
Note that the sequence can be obtained by successive iterations of the substitution operator 
$$\begin{cases}
S_1 \mapsto S_1S_0\\
S_0 \mapsto S_1S_1.\\
\end{cases}
$$
Later on we will use this pattern to find the modulus of continuity of $\eta(t) =  \textup{H.dim }\BB (t)$ near $c_F$; in fact in Section \ref{tuning} we will also see how period-doubling fits in the more general framework of tuning operators (cf. Corollary \ref{c:pd}).

\subsection{Anatomy of generalized bounded type numbers}\label{anatomy}

In this section, we will provide an analoguous description for the set $\BB(t)$, 
by producing the set of pseudocenters of all connected components of its complement. We need two definitions: 

\begin{definition}
Let $t\in (0,g]$ be a fixed value, $r:=[0;S_0]=[0;S_1] \in \QQ\cap (0,1)$ and let us define
\begin{equation}
V_r:=(S_1 \cdot \ell(t), S_0 \cdot \ell(t)).
\end{equation} 
We also define $V_0=[0, \ell(t))$ and $V_1:=(1/(1+ \ell(t)), 1]$. We shall call $V_r$ the $t$-gap generated by $r\in \QQ \cap [0,1]$.
\end{definition}

\begin{definition}
Let $t\in (0,1)$ and $r\in \QQ \cap [0,1]$. We say that  $m\in \NN$ is the \emph{running time} of $r$ if
\begin{equation}\label{eq:rtime} 
G^m(r)=0 \ \ \mbox{ but } G^k(r) >0 \ \ \forall 0\leq k \leq m-1. 
\end{equation}
We say that $r$ is a \emph{$t$-label} 
if the $G$-orbit of $r$ never enters the interval $(0,\ell(t))$;
this means that if $m$ is the running time of $r$ 
\begin{equation}\label{eq:tlabel} 
G^m(r)=0 \ \ \mbox{ but } G^k(r) > \ell(t) \ \ \forall 0\leq k \leq m-1. 
\end{equation}
We shall denote by $\QQ_t$ the set of all $t$-labels.
\end{definition}

We will prove that $t$-gaps provide a complete description of $\BB(t)$ by determining the connected components of its complement: 

\begin{proposition} \label{anatomyprop}
Let $t \in (0, g]$. Then the connected components of the complement of $\BB(t)$ are precisely
the $t$-gaps generated by the $t$-labels, hence
$$\BB(t) = [0, 1] \setminus \bigcup_{r \in \mathbb{Q}_t} V_r.$$
\end{proposition}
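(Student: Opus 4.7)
The plan is to identify the sets $V_r$---interpreted throughout with the parameter $\beta = \ell(t)$, so that $V_r := (f_{S^-}(\beta), f_{S^+}(\beta))$ for $r = [0;S^\pm] \in (0,1)$, with boundary conventions $V_0 = [0,\beta)$ and $V_1 = (1/(1+\beta), 1]$---as the connected components of $[0,1]\setminus \BB(t)$. By Lemma~\ref{ell}(v) one may replace $\BB(t)$ by $\BB(\beta)$ throughout, and the argument will proceed in three steps.

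First, I would check that each $V_r$ with $r \in \mathbb{Q}_t$ lies in the complement of $\BB(t)$. For $r \in (0,1)$, every $y \in V_r\setminus\{r\}$ can be written as $y = f_{S^\pm}(z)$ with $z \in (0,\beta)$, so $G^{|S^\pm|}(y)=z<\beta$ forces $y \notin \BB(\beta)$; the point $r$ itself is rational, hence outside $\BB(\beta)$ by Lemma~\ref{B}(iii). The boundary cases $V_0, V_1$ follow from the elementary inclusion $\BB(\beta)\subseteq [\beta, 1/(1+\beta)]$, which itself is immediate from the definition of $\BB(\beta)$.

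Next---and this is the crucial technical step---I would show that the endpoints $f_{S^\pm}(\beta)$ themselves lie in $\BB(t)$. For iterates $k\geq |S^\pm|$ one has $G^k(f_{S^\pm}(\beta)) = G^{k-|S^\pm|}(\beta) \geq \beta$ because $\beta \in \BB(\beta)$. For $k<|S^\pm|$ one has $G^k(f_{S^\pm}(\beta))=f_{T_k}(\beta)$, where $T_k$ is the corresponding suffix of $S^\pm$. If $|T_k|$ is even, $f_{T_k}$ is increasing and the $t$-label condition $f_{T_k}(0)=G^k(r)>\beta$ gives $f_{T_k}(\beta)>\beta$ immediately. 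If $|T_k|$ is odd, $f_{T_k}$ is decreasing and the desired inequality becomes $\beta \leq [0;\overline{T_k}]$; I would derive this by showing that its failure, combined with the $t$-label bound $\beta<[0;T_k]$, would place $\beta$ inside one of the gap intervals $I_{r'}$ of the description of $\EE^c$ in Section~\ref{extrat}, contradicting $\beta\in\EE$.

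Finally, given $x \notin \BB(t)$, let $J$ be the connected component of $[0,1]\setminus \BB(t)$ containing $x$ and let $r$ be its pseudocenter. The first step yields $V_r\subseteq J$ (since $r\in V_r\cap J$ and $V_r$ is connected), and the second yields $J\subseteq V_r$ by maximality (the endpoints of $V_r$ sit in $\BB(t)$), so $J = V_r$. That $r$ is necessarily a $t$-label is then read off from the pseudocenter condition: any $k$ less than the running time of $r$ with $G^k(r)\leq\beta$ would allow one to exhibit a rational of strictly smaller denominator inside $J$, contradicting minimality. Pairwise disjointness of the $V_r$'s is an automatic consequence of this bijection with connected components. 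I expect the main obstacle to be the odd-length case of Step~2, where the inequality $\beta \leq [0;\overline{T_k}]$ depends on a careful interaction between the $t$-label hypothesis and the gap structure of $\EE^c$, most likely mediated by the string lemma~\eqref{stringlemma}.
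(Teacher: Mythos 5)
Your proposal is correct and takes essentially the same route as the paper: your Step~2 is the paper's Lemmas~\ref{one} and~\ref{two} (with the odd-suffix case settled, exactly as in Lemma~\ref{one}, by the fact that $\beta\in\EE$ cannot lie in the quadratic gap between $[0;\overline{T_k}]$ and $[0;T_k]$), and your pseudocenter/minimal-denominator argument is the paper's Lemma~\ref{three}. The only adjustment needed is the order of Step~3: you must first show that the pseudocenter $r$ of $J$ is a $t$-label (via the denominator argument) and only then invoke Step~2 to get $J\subseteq V_r$, since Step~2's hypothesis is precisely that $r$ is a $t$-label.
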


\begin{corollary} \label{isolated}
$\BB(t)$ contains isolated points if and only if $\beta := \ell(t)$ is isolated in $\EE$.
\end{corollary}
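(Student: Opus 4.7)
\noindent\emph{Plan.} I would use Proposition \ref{anatomyprop}: the complement of $\BB(t)$ is a disjoint union of open gaps $V_r = (f_{S^-_r}(\beta), f_{S^+_r}(\beta))$, with $|S^-_r|$ odd and $|S^+_r|$ even. A point $y\in\BB(t)$ is isolated if and only if it is a common endpoint of two distinct gaps, i.e. $f_{S^+_{r_1}}(\beta) = f_{S^-_{r_2}}(\beta)$ for some $r_1 \neq r_2$. The task is then to translate this condition into information about $\beta$ and match it with the description of isolated points of $\EE$ recalled in Section \ref{extrat}.

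For $(\Leftarrow)$, assume $\beta$ is isolated in $\EE$. The cascade description gives $\beta = [0;\overline{S_1}]$ for some extremal $r_0 = [0;S_1]$ with $|S_1|$ odd, and pure periodicity gives $f_{S_1}(\beta) = \beta$, so the candidate gap $V_{r_0}$ has lower endpoint $\beta$. I would then verify $r_0 \in \QQ_t$, i.e. $G^k(r_0) = [0;Y] > \beta = [0;\overline{XY}]$ for every split $S_1 = XY$ with $|X| = k < |S_1|$. When $S_1$ does not have period $k$, the first position at which $XY$ and $YX$ differ lies within $Y$, and reading it as a comparison of $[0;Y]$ with $\beta$ turns the extremal inequality $XY < YX$, via the alternating lex convention, into $[0;Y] > \beta$. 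When $S_1$ does have period $k$, then $[0;Y]$ is the $(m-k)$-th principal convergent of $\beta$ and lies on the correct side iff $m-k$ is odd, i.e. $k$ is even; a short Fine--Wilf-style argument (comparing the splits $|X|=k$ and $|X|=m-k$) rules out odd proper periods of an extremal string of odd length. Once $r_0 \in \QQ_t$, the gap $V_{r_0}$ together with $[0,\beta)$ separates $\beta = \min \BB(t)$ from the rest of $\BB(t)$, so $\beta$ is isolated.

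For $(\Rightarrow)$, assume $y \in \BB(t)$ is isolated, with adjacent gaps $V_{r_1}, V_{r_2}$. Since $|S^+_{r_1}|$ is even and $|S^-_{r_2}|$ is odd, these strings have different lengths; WLOG $|S^+_{r_1}| < |S^-_{r_2}|$. Uniqueness of continued fraction expansions forces $S^-_{r_2} = S^+_{r_1} T$ for an odd-length suffix $T$, and the equality $f_{S^+_{r_1}}(\beta) = f_{S^+_{r_1} T}(\beta)$ gives $f_T(\beta) = \beta$, so $\beta$ has purely periodic continued fraction whose primitive period $T_0$ divides $|T|$ and is thus of odd length. From $\beta \in \EE$ and the string lemma, $XY \leq YX$ for every split $T_0 = XY$; primitivity rules out equality, so $T_0$ is extremal, and Section \ref{extrat} then identifies $\beta = [0;\overline{T_0}]$ as an isolated point of $\EE$.

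The main obstacle I expect is the combinatorial verification in $(\Leftarrow)$ that $r_0$ is a $t$-label, in particular the Fine--Wilf-style exclusion of odd proper periods of an extremal string of odd length. The rest of the argument is a fairly mechanical translation between the gap structure of $\BB(t)$ and the periodicity of $\beta$, controlled by the alternating lex order.
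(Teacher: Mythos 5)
Your proposal is correct and its skeleton is the same as the paper's: an isolated point of $\BB(t)$ is a common endpoint of two adjacent gaps, hence admits two representations $S\cdot\beta=T\cdot\beta$ with $|S|,|T|$ of opposite parity, forcing $\beta$ to be purely periodic with odd minimal period; conversely, when $\beta=[0;\overline{S_1}]$ with $|S_1|$ odd, the interval immediately to the right of $\beta$ is a gap because $f_{S_1}$ fixes $\beta$. Your $(\Rightarrow)$ direction is essentially the paper's verbatim (you additionally spell out why an odd primitive period of a point of $\EE$ is extremal, which the paper absorbs into the recalled characterization of isolated points of $\EE$ from section \ref{extrat}; just make sure to also cover the degenerate case where the left-hand gap is $[0,\beta)$, so that only one of the two strings is nonempty). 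The real difference is in $(\Leftarrow)$: the step you flag as the main obstacle --- verifying that $r_0=[0;S_1]$ is a $t$-label via a combinatorial analysis of the splittings of $S_1$ --- is avoidable. You do not need $V_{r_0}$ to be a full connected component of the complement (which is what the label property buys via Lemma \ref{two}); you only need it to be disjoint from $\BB(t)$, and \emph{every} $\beta$-gap is disjoint from $\BB(\beta)=\BB(t)$ regardless of whether its pseudocenter is a label (this is the first of the basic observations preceding Lemma \ref{one}). Equivalently, the paper argues directly: any $x\in(\beta,S_1\cdot 0)$ is of the form $S_1\cdot y$ with $y<\beta$ because $|S_1|$ is odd, so $G^{|S_1|}(x)=y<\beta$ and $x\notin\BB(\beta)$. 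Either shortcut eliminates the Fine--Wilf-style argument entirely (though your exclusion of odd proper periods is itself consistent with Lemma \ref{prefsuffisodd}, so that route would also go through).
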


\begin{proof}
Let us set $\beta:=\ell(t)$. Let us recall that $\beta$ is
isolated in $\EE$ iff $\beta=[0;\overline{S}]$ with minimal period $S$ of odd length (\cite{BCIT}, Lemma 4.5).

Now, we shall prove that if $\beta$ is isolated in $\EE$, then it is isolated in $\BB(t) = \BB(\beta)$. 
Indeed, if $\beta=[0;\overline{S}]$ has odd minimal period, then $S\cdot 0>\beta$. Moreover if
 $x\in (\beta, S\cdot 0)$ then $x = S \cdot y > S \cdot \beta = \beta$ for some $y$, hence, since $|S|$ 
is odd, $G^{|S|}(x) = y < \beta$. This means that $x$ does not belong to $\BB(t) = \BB(\beta)$, 
hence $\beta$ is isolated in $\BB(\beta)$.

Conversely, if $x$ is an isolated point of $\BB(t)$ then by Proposition \ref{anatomyprop} $x$ is the common endpoint of two adjacent $t$-gaps; therefore we can write 
$x=S\cdot \beta=T\cdot \beta$ where $S,T$ are two strings, one of even
and one of odd length. If -say- $S$ is the longest then $S=TR$ with
$|R|$ odd, therefore $R\cdot \beta =\beta$, which means that $\beta$
is periodic with minimal period of odd length.
\end{proof}

\begin{corollary}
The points of discontinuity of the set-valued function
$$t \mapsto \BB(t)$$ in the Hausdorff topology are precisely the
isolated points of $\EE$ (corresponding to period-doubling bifurcation
parameters).
\end{corollary}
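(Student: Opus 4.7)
\textit{Proof plan.} I plan to analyze continuity and discontinuity of $\BB$ according to how $t$ sits relative to $\EE$, using the structural lemmas already established. By Lemma~\ref{ell}(v), $\BB$ is locally constant on $[0,g]\setminus\EE$ and hence continuous there, and by Lemma~\ref{B}(v) it is left-continuous everywhere in the Hausdorff metric (being a monotone decreasing family of compacts with $\BB(t)=\bigcap_{s<t}\BB(s)$). Thus the only possible discontinuities are right-jumps at points of $\EE$, and the remaining task is to match these with the isolated points of $\EE$.

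For the direct implication, suppose $\beta\in\EE$ is isolated. By the description of isolated points in Section~\ref{extrat}, $\beta$ is the left endpoint of some complementary gap $I_r$ of $\EE$, so there exists $\beta'\in\EE$ with $\beta'>\beta$ and $(\beta,\beta')\cap\EE=\emptyset$. Lemma~\ref{ell}(v) then yields $\BB(s)=\BB(\beta')$ for all $s\in(\beta,\beta']$; since $\BB(\beta')\subseteq[\beta',1]$ does not contain $\beta\in\BB(\beta)$, one has $d_H(\BB(\beta),\BB(s))\geq \beta'-\beta>0$, confirming the right-jump.

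For the converse, let $\beta\in\EE$ be non-isolated. The cascade description of $\EE$ forces $\beta$ to be a two-sided accumulation point, so I pick $\beta_n\in\EE$ with $\beta_n\to \beta^+$ and reduce to showing $\BB(\beta_n)\to \BB(\beta)$ in Hausdorff distance. Proposition~\ref{anatomyprop} gives
$$
\BB(\beta)=[0,1]\setminus\bigcup_{r\in\QQ_\beta}V_r(\beta),\qquad \BB(\beta_n)=[0,1]\setminus\bigcup_{r\in\QQ_{\beta_n}}V_r(\beta_n),
$$
and directly from the definitions $\QQ_{\beta_n}\subseteq\QQ_\beta$, while $V_{r'}(\beta)\subseteq V_{r'}(\beta_n)$ for each $r'\in\QQ_{\beta_n}$ since the endpoints of $V_{r'}$ move monotonically with $t$. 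Given $x\in \BB(\beta)\setminus\BB(\beta_n)$, $x$ lies in some gap $V_{r'}(\beta_n)$ of $\BB(\beta_n)$; because $V_{r'}(\beta)$ is itself a gap of $\BB(\beta)$, $x$ is forced into the thin ``annulus'' $V_{r'}(\beta_n)\setminus V_{r'}(\beta)$. The contraction estimate~\eqref{contraction} caps the length of each side of this annulus by $(\beta_n-\beta)/q(r')^2\leq \beta_n-\beta$, and since the endpoints of $V_{r'}(\beta_n)$ lie in $\BB(\beta_n)$ this gives $d(x,\BB(\beta_n))\leq \beta_n-\beta$ uniformly in $x$ and $r'$, whence $d_H(\BB(\beta),\BB(\beta_n))\leq \beta_n-\beta\to 0$.

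The main obstacle I anticipate is the structural claim that a non-isolated point of $\EE$ is necessarily a right-accumulation point of $\EE$, so as to rule out an a priori scenario in which $\beta\in\EE$ is isolated from the right but not from the left (which would contribute an extra discontinuity not matched by an isolated point). I would close this gap by invoking the full cascade description of Section~\ref{extrat}: every isolated point of $\EE$ has the form $[0;\overline{S_1}]$ with $|S_1|$ odd and is simultaneously the left endpoint of a gap $I_r$ and the right endpoint of the next-level cascade gap $I_{r_1}$, hence truly isolated on both sides. Once this dichotomy is granted, passing from sequences $\beta_n\in\EE$ to arbitrary approach sequences $s_n\to \beta^+$ is handled by the identities $\BB(s_n)=\BB(\ell(s_n))$ together with $\ell(s_n)\to \beta^+$, finishing the proof.
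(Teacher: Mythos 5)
Your proof is correct and follows the same overall decomposition as the paper (left-continuity from $\BB(t)=\bigcap_{t'<t}\BB(t')$, a forced right-jump at isolated points of $\EE$, and right-continuity at non-isolated points via the fact that such points are accumulated from the right by $\EE$), but the two hard steps are carried out by genuinely different mechanisms. For the jump at an isolated $\beta$, the paper invokes Corollary \ref{isolated} (isolated points of $\BB(t)$), whereas you use the gap structure of $\EE$ together with Lemma \ref{ell}(v) and the trivial inclusion $\BB(\beta')\subseteq[\beta',1]$; both work, and yours is marginally more self-contained. For right-continuity at a non-isolated $\beta$, the paper runs a soft density-plus-finiteness argument: the label images $\{S\cdot\beta\}$ are dense in $\BB(\beta)$, one extracts a finite $\epsilon/2$-net, and chooses $\beta_{\bar n}$ close enough that those finitely many labels survive as $\beta_{\bar n}$-labels. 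You instead compare the two sets gap-by-gap via Proposition \ref{anatomyprop}, using $\QQ_{\beta_n}\subseteq\QQ_\beta$, the monotone growth of each $V_{r'}(t)$ in $t$, and the contraction bound \eqref{contraction} to trap any point of $\BB(\beta)\setminus\BB(\beta_n)$ within distance $\beta_n-\beta$ of an endpoint of a gap of $\BB(\beta_n)$ (Lemma \ref{two} guaranteeing those endpoints lie in $\BB(\beta_n)$). This buys a quantitative, one-sided Lipschitz modulus $d_H(\BB(\beta),\BB(s))\leq\ell(s)-\beta$ rather than mere continuity, at the cost of leaning more heavily on Proposition \ref{anatomyprop}; you should just make sure the boundary gaps $V_0$ and $V_1$ are covered by the same estimate, which they are. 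Finally, the structural fact you flag as the main obstacle --- that a non-isolated point of $\EE$ must be a right-accumulation point --- is asserted without proof in the paper as well, and your justification via the cascade description of Section \ref{extrat} (every left endpoint $[0;\overline{S_1}]$ of a gap $I_r$ is genuinely isolated) is the right one.
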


\begin{proof}

 If $A$ is a set, we shall set $N_\epsilon(A):=\{x: d(x,A)
<\epsilon\}$; let us also recall that if $A,B$ are two nonempty closed
sets, then the Hausdorff distance $d_H(A,B)$ is smaller than
$\epsilon$ iff $B\subset N_\epsilon (A)$ and $A\subset N_\epsilon
(B)$; let us point out that, since the sets we are interested in are
monotone, we will always have to check just one inclusion.

It is easy to prove that $t\mapsto \BB(t)$ is everywhere left-continuous 
with respect to Hausdorff distance: this amounts to checking
that, if $t\in (0,1)$ and $t_n \nearrow t$, then for every fixed
$\epsilon>0$ there exists $\bar{n}$ such that $\BB(t_{\bar{n}})\subset
N_\epsilon(\BB(t))$. This is a straightforward consequence of
$\BB(t)=\cap_{t'<t} \BB(t')$.

On the other hand, because of Corollary \ref{isolated} the map cannot be right-continuous 
at isolated points of $\EE$, so we just have to prove that
it is right-continuous at any $\beta$ a non-isolated point of $\EE$; 
such points are always accumulated on the right by points of $\EE$, hence
$$\exists (\beta_n)\subset \EE \ :
\ \beta_n \searrow \beta \ \mbox{ as } n \to +\infty.$$

To prove right-continuity at any such $\beta \in \EE$ it will be
enough to check that for all $\epsilon >0$ there exists $\bar{n}$ such
that $\BB(\beta)\subset N_\epsilon(\BB(\beta_{\bar{n}}))$.  Since the
set $\{ S\cdot \beta \ : \ [0;S]\in \QQ_\beta\}$ is dense in
$\BB(\beta)$ we can find a finite subset $\mathcal{S}\subset \{S \ :
\ [0;S] \in \QQ_\beta\}$ such that $\BB(\beta)\subset
N_{\epsilon/2}(E)$ with $E:=\{ S\cdot \beta \ : \ \beta\in
\mathcal{S}\}$. Therefore if we prove that $$E\subset
N_{\epsilon/2}(\BB(\beta_{\bar{n}})) \ \ \ \ (*)$$ we are done.  For
$S\in \mathcal{S}$ we denote $\underline{r}(S)$ the minimum positive
element of the orbit $G^k(S\cdot 0)$; since $\mathcal{S}\subset
\QQ_\beta$ we have that $\underline{r}(S)>\beta$, therefore
$\underline{r}:=\min_{S\in \mathcal{S}} \underline{r}(S)>\beta$ as
well. Now let us choose $\bar{n}$ such that
$$ (a) \ \beta_{\bar{n}} \in (\beta, \underline{r}), \ \ \ \ (b)
\ |\beta_{\bar{n}} - \beta|<\epsilon/2;$$ by $(a)$ we have that
$\mathcal{S}\subset\QQ_{\beta_{\bar{n}}}$ and by Proposition \ref{anatomyprop} $S\cdot \beta_{\bar{n}} \in
\BB(\beta_{\bar{n}}) $ for all $S\in \mathcal{S}$. On the other hand $(b)$ implies that 
$$ |S\cdot \beta -S\cdot \beta_{\bar{n}}|\leq|\beta-\beta_{\bar{n}}|<\epsilon/2 \ \  \ \forall S\in \mathcal{S},$$
and so condition (*) above holds.

\end{proof}

In the following, we will denote $\beta := \ell(t)$. 
In order to prove Proposition \ref{anatomyprop}, let us first observe some basic properties of $t$-labels (recall that $\BB(t) = \BB(\beta)$):
\begin{itemize}
\item If $V_r$ is a $t$-gap, then $V_r \cap \BB(t)=\emptyset$;
\item if $V_r$ is the $t$-gap generated by $r$ and $G^k(r) \neq 0$, then $G^k(V_r)$ is the $t$-gap generated by $G^k(r)$.
\item $[0,1]\setminus \BB(t)= \bigcup_{r\in [0,1]\cap \QQ} V_r$.
\end{itemize}

In the formula above, in the union lots of overlappings occur;
nonetheless we shall see that $\BB(t)$ can be described as the
disjoint union of a subfamily of $t$-gaps.

\begin{lemma}\label{one}
Let $\beta \in \EE$ be a fixed value, $r:=[0;S] \in \QQ\cap [0,1]$, $r>\beta$. Then $S\cdot \beta \geq \beta$.
\end{lemma}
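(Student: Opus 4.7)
The plan is to exploit the monotonicity of the Möbius transformation $f_S$ together with the geometry of the cylinder $I(S) = f_S([0,1])$. Writing $S = (a_1, \ldots, a_n)$, the endpoints of $I(S)$ are $f_S(0) = r$ and $f_S(1) = [0; a_1, \ldots, a_{n-1}, a_n+1]$; the map $f_S$ is increasing if $|S|$ is even and decreasing if $|S|$ is odd.

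If $|S|$ is even, monotonicity immediately gives $S \cdot \beta = f_S(\beta) \geq f_S(0) = r > \beta$, and we are done without even using $\beta \in \EE$.

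If $|S|$ is odd, $f_S$ is decreasing and $I(S) = [f_S(1), r]$. I would split according to the position of $\beta$ relative to $I(S)$. If $\beta \leq f_S(1)$, then the decreasing monotonicity of $f_S$ yields $f_S(\beta) \geq f_S(1) \geq \beta$. Otherwise $f_S(1) < \beta < r$, so $\beta$ lies in the interior of the cylinder $I(S)$; by the characterisation of cylinders its continued fraction expansion must begin with $S$, so one can write $\beta = f_S(\gamma)$ with $\gamma := G^{|S|}(\beta)$. This is where the exceptional-set hypothesis enters: $\beta \in \EE$ forces $\gamma \geq \beta$, and applying the decreasing map $f_S$ reverses this inequality to $f_S(\gamma) \leq f_S(\beta)$, i.e.\ $\beta \leq S \cdot \beta$, as required.

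The only delicate point is the odd-length sub-case where $\beta$ lies strictly inside $I(S)$: there $r > \beta$ alone cannot suffice because $f_S$ reverses orientation, and the role of $\beta \in \EE$ is precisely to supply the extra inequality $G^{|S|}(\beta) \geq \beta$ that makes the orientation-reversal work in our favour. The remaining cases are purely a matter of tracking the sense of $f_S$.
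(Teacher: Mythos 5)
Your proof is correct, but in the odd-length case it takes a genuinely different route from the paper's. The paper sets $\alpha^-:=[0;\overline{S}]$, the left endpoint of the \emph{quadratic} interval $I_r=(\alpha^-,\alpha^+)$ generated by $r$, uses the structure of $[0,1]\setminus\EE$ (every quadratic interval is disjoint from $\EE$, cf.\ \eqref{complementE} and \cite{CT}) together with $\beta<r<\alpha^+$ to get $\beta\leq\alpha^-$, and then exploits that $\alpha^-$ is the fixed point of the decreasing map $f_S$: $S\cdot\beta\geq S\cdot\alpha^-=\alpha^-\geq\beta$. You instead split on the position of $\beta$ relative to the \emph{cylinder} $I(S)$ and, in the interior subcase, use the defining property of $\EE$ directly via $G^{|S|}(\beta)\geq\beta$. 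Your argument is more self-contained --- it needs only the definition of $\EE$ and elementary facts about cylinders, not the description of the complement of $\EE$ imported from \cite{CT} --- at the cost of one extra case split. The single point you should make explicit is that in the subcase $f_S(1)<\beta<r$ the number $\beta$ is irrational, so that its continued fraction expansion genuinely begins with $S$ and $\gamma=G^{|S|}(\beta)$ is well defined; this is immediate, since the only rational point of $\EE$ is $0$ (any positive rational has $G^m=0$ at its running time) and there $\beta>f_S(1)>0$.
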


\begin{proof}
If $|S|$ is even the claim is trivial:
$$S\cdot \beta \geq S\cdot 0 = r >\beta.$$ If $|S|$ is odd, set
$\alpha^-:=[0; \overline{S}]$; $\alpha^-$ is the left endpoint of the
quadratic interval $I_r$ generated by $r$, so $\alpha^-<r$ but also
$\beta \leq \alpha^-$ (because no point of $\EE$ can belong to $I_r$).
Hence $S \cdot \beta \geq S \cdot \alpha^- = \alpha^- \geq \beta$.
\end{proof}

\begin{lemma}\label{two}
Let $t \in (0, 1)$, and $r:=[0;S_1]=[0;S_0] \in \QQ_t$ be a $t$-label, $\beta := \ell(t)$.
 Then the $t$-gap generated by $r$, namely the open interval
 $V_r:=(S_1\cdot \beta,S_0\cdot \beta)$, is  a connected component of $[0,1]\setminus \BB(t)$.
\end{lemma}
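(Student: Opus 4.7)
The plan is to use the reduction $\BB(t) = \BB(\beta)$ (lemma~\ref{ell}-(v)) and then show that $V_r$ is a connected component of $[0,1] \setminus \BB(\beta)$ by splitting into two independent checks: (a) the open interval $V_r$ is disjoint from $\BB(\beta)$, and (b) the two endpoints $S^\pm\cdot\beta$ both lie in $\BB(\beta)$. Together these identify $V_r$ with a maximal open subinterval of the complement, which is exactly a connected component.

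For step (a), I would observe that $r = f_{S^+}(0) = f_{S^-}(0)$ lies inside $V_r$ (the parity of $|S^\pm|$ forces $S^-\cdot\beta < r < S^+\cdot\beta$), so $V_r$ decomposes as $(S^-\cdot\beta, r)\cup\{r\}\cup(r, S^+\cdot\beta)$. The rational $r$ is sent to $0 < \beta$ by a finite power of $G$, hence $r \notin \BB(\beta)$. For $x \in (r, S^+\cdot\beta) \subset I(S^+)$, writing $x = f_{S^+}(y)$ with $y \in (0,\beta)$ (using that $f_{S^+}$ is strictly increasing since $|S^+|$ is even) yields $G^{|S^+|}(x) = y < \beta$, so $x \notin \BB(\beta)$. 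The symmetric argument, using that $f_{S^-}$ is decreasing, handles the left half of $V_r$.

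For step (b), I would write $S \in \{S^+, S^-\}$ as $(a_1, \ldots, a_n)$. An induction based on $G(f_a(z)) = z$ for $z \in (0,1)$ yields the identity $G^k(S\cdot\beta) = (a_{k+1}, \ldots, a_n)\cdot\beta =: T_k\cdot\beta$ for $0 \le k \le n$, and hence $G^k(S\cdot\beta) = G^{k-n}(\beta)$ for $k \ge n$. When $k \ge n$ the inequality $G^k(S\cdot\beta) \ge \beta$ is immediate from $\beta \in \EE$. For $0 \le k < n$, lemma~\ref{one} reduces the required $T_k\cdot\beta \ge \beta$ to the numerical condition $[0;T_k] > \beta$. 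For whichever of $S^+, S^-$ has last digit $\ge 2$, this is precisely the $t$-label hypothesis $G^k(r) > \beta$. For the other expansion of $r$, every non-trivial suffix represents the same rational as a suffix of the first via the ambiguity $[\ldots, a, 1] = [\ldots, a+1]$, so the only new tail to check is the terminal $(1)$: here $[0;(1)] = 1 > \beta$ and lemma~\ref{one} gives $1/(1+\beta) \ge \beta$, which holds precisely because $\beta \le g$.

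The main technical subtlety sits in step (b): the $t$-label hypothesis naturally supplies the tail bound only for the Gauss-orbit expansion (the shorter one, with last digit $\ge 2$), and one has to verify by hand that the extra terminal suffix $(1)$ arising in the longer expansion still produces a Gauss iterate of $S\cdot\beta$ that lies above $\beta$. This is exactly where the a priori upper bound $\beta \le g$ on the admissible range of $\beta$ is used.
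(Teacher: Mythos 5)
Your proof is correct and follows essentially the same route as the paper's: one checks that $V_r$ is disjoint from $\BB(t)=\BB(\beta)$ and that the endpoints $S^\pm\cdot\beta$ belong to $\BB(\beta)$, the latter by applying lemma~\ref{one} to the tails of the two continued-fraction expansions of $r$ together with the $t$-label hypothesis $G^k(r)>\beta$. Your explicit handling of the terminal suffix $(1)$ of the longer expansion, where $1/(1+\beta)\ge\beta$ uses $\beta\le g$, is a detail that the paper's one-line claim $G^m(\beta^\pm)=\beta$ quietly elides, so it is worth keeping.
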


\begin{proof}
If $r=0$ or $r=1$ there is no problem. Otherwise $r=[0;S_1]=[0;S_0]$ and
$V_r \subset [0,1] \setminus \BB(t)$; to prove our claim it is sufficient to check that
$\partial V_r \subseteq \BB(t)$, i.e. both $\beta^+:=S_0 \cdot \beta$ and $\beta^-:=S_1 \cdot \beta$ belong to the set $ \BB(\beta)$.
By assumption we have that $G^k(r)>\beta$ for  $0\leq k \leq m-1$; moreover $\partial V_{G^k(r)}=\{G^k(\beta^\pm)\}$ and hence
 $G^k(\beta^\pm) \geq \beta$ for $0\leq k\leq m-1$ (by Lemma \ref{one}).
On the other hand $G^m(\beta^\pm)=\beta$ and hence $\beta^\pm \in \BB(\beta)$.
\end{proof}

\begin{lemma}\label{three}
Let $t\in (0,1)$, let $V:=(\xi^-, \xi^+)$ be a  connected component of $[0,1]\setminus \BB(t)$, and let $r\in V\cap \QQ$.
The following conditions are equivalent:
\begin{enumerate}
\item[(i)] $r$ is the pseudocenter of $V$;
\item[(ii)] $r$ is a $t$-label.
\end{enumerate} 

\end{lemma}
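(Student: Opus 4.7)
The plan is to prove the two implications separately, throughout setting $\beta := \ell(t) \in \EE$ so that $\BB(t) = \BB(\beta)$. A fact I will use repeatedly is that $\beta$ is irrational and $\BB(\beta) \cap \QQ = \emptyset$, since any positive rational is eventually mapped to $0$ by $G$ (Lemma \ref{B}(iii)).

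For \emph{(ii) $\Rightarrow$ (i)}, take a $t$-label $r = [0; S^\pm]$ with $q := q(r) = q(S^\pm)$. By Lemma \ref{two}, $V_r = (S^- \cdot \beta, S^+ \cdot \beta)$ is a connected component containing $r$. Since $|S^+|$ and $|S^-|$ have opposite parity, $f_{S^+}$ is an increasing contraction sending $[0,\beta]$ onto $[r, S^+ \cdot \beta]$ and $f_{S^-}$ a decreasing one sending $[0,\beta]$ onto $[S^- \cdot \beta, r]$, whence
\[
V_r \setminus \{r\} = f_{S^-}\bigl((0,\beta)\bigr) \sqcup f_{S^+}\bigl((0,\beta)\bigr).
\]
Any rational $p'/q'$ in $V_r \setminus \{r\}$ is therefore of the form $S^\pm \cdot y$ for some rational $y \in (0, \beta)$; writing $y = [0;T]$, the fact that $y \in (0,1) \cap \QQ$ gives $q(T) \geq 2$, and \eqref{eq:supermult} yields
\[
q' = q(S^\pm T) \geq q(S^\pm) \cdot q(T) \geq 2q > q.
\]
Hence no rational other than $r$ in $V_r$ has denominator $\leq q$, so $r$ is the pseudocenter.

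For \emph{(i) $\Rightarrow$ (ii)}, let $r$ be the pseudocenter of $V$ with denominator $q$ and running time $m$, and write $r = [0; a_1, \ldots, a_m]$. If $r < \beta$ then $V$ must be the leftmost component $[0, \beta)$ (since $\beta = \min \BB(\beta)$ by Lemma \ref{ell}), whose pseudocenter is $0$, a trivial $t$-label; so we may assume $r > \beta$. Suppose for contradiction that $r$ is not a $t$-label, so $G^k(r) \leq \beta$ for some $k \in \{0, \ldots, m-1\}$. Irrationality of $\beta$ makes this strict, and the condition $r > \beta$ forces $k \geq 1$. Setting $S' := (a_1, \ldots, a_k)$, the half-open interval
\[
f_{S'}\bigl([0,\beta)\bigr) = \{S' \cdot z \ : \ z \in [0, \beta)\}
\]
has endpoints $S' \cdot 0 = [0; a_1,\ldots, a_k]$ and $S' \cdot \beta$, and every point $y = S' \cdot z$ in it satisfies $G^k(y) = z < \beta$, so $y \notin \BB(t)$. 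Being connected and containing $r = S' \cdot G^k(r)$, it is entirely contained in $V$; in particular $S' \cdot 0 \in V$ is a rational with denominator $q_k < q_m = q$, by the strict increase of the convergent denominators, contradicting the minimality of $q$. The main obstacle is essentially bookkeeping: one must track the parities of $|S^\pm|$ and $|S'|$ to correctly identify the endpoints of $V_r$ and $f_{S'}([0,\beta))$, and use that $\BB(\beta) \cap \QQ = \emptyset$ to ensure the rational endpoint $S' \cdot 0$ lies strictly in the open component $V$ rather than on its boundary.
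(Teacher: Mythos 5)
Your proof is correct and follows essentially the same route as the paper's: for (i)\,$\Rightarrow$\,(ii) you truncate the expansion of $r$ at the offending time $k$, push the interval $[0,\beta)$ forward by $f_{S'}$, and use connectedness to place the shorter convergent $S'\cdot 0$ inside $V$, contradicting minimality of the denominator — exactly the paper's argument. For (ii)\,$\Rightarrow$\,(i) you likewise invoke Lemma~\ref{two} to identify $V=V_r$; the only difference is that you explicitly justify, via the decomposition $V_r\setminus\{r\}=f_{S^-}((0,\beta))\sqcup f_{S^+}((0,\beta))$ and the estimate \eqref{eq:supermult}, the pseudocenter claim that the paper states without proof.
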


\begin{proof}
$(i)\Rightarrow (ii)$ We argue by contradiction. Assume that $r\in \QQ$ is
the pseudocenter of $V$ but there exists $k_0$ such that $r_1:=G^{k_0}(r)
\in (0,\ell(t)).$ In this case $r=[0; S_0 S_1]$ and $r_1=[0;S_1]$ for certain strings $S_0, S_1$; moreover
setting $r_0:=[0; S_0]$ we see that the interval $[0,r_1] \subset
[0,\ell(t))$, hence $S_0 \cdot [0,r_1] \subset [0, 1] \setminus \BB(\ell(t)) = [0,1]\setminus \BB(t)$, and so $S_0 \cdot [0,r_1]$
    must also be contained in $V$, by connectedness. This is a
    contradiction: if we denote as den$(r)$ the denominator of $r$ in reduced form, then $r_0=S_0\cdot 0 \in V$ and den$(r_0) < $ den$(r)$
    imply that $r$ cannot be the pseudocenter of $V$.

$(ii)\Rightarrow (i)$ Let $\beta:= \ell(t)\in \EE$, so that
$\BB(t)=\BB(\beta)$. By Lemma \ref{two}, if $r:=[0;S_1]=[0;S_0] \in V$
satisfies (ii) then $V=(S_1\cdot \beta, S_0 \cdot \beta)$ and hence
$r$ is the pseudocenter of $V$.
\end{proof}

\textit{Proof of Proposition \ref{anatomyprop}}. 
If $r$ is a $t$-label, then the $t$-gap $V_r$ is a connected component of $[0,1] \setminus \BB(t)$ by Lemma \ref{two}. 
On the other hand, if $V$ is a connected component, then its pseudocenter $r$ is a $t$-label by Lemma \ref{three}, hence 
$V = V_r$ by Lemma \ref{two}. \qed
\medskip

\begin{remark}
In order to compute $\ell (t)$ given $t$, one can proceed as follows: if $t\in \EE$ then $t=\ell(t)$; if not, then $t$ belongs to some maximal interval $(\alpha^-, \alpha^+)$ and we can find  the smallest integer $m$ such that  $G^m(t)<t$; thus $t=S\cdot y$ with $y = G^m(t)$ and the  string $S$, which consists of the first $m$ partial quotients of $t$, is an expansion of the pseudocenter of $(\alpha^-,\alpha^+)$ so that $\ell(t)=\alpha^+=[0;\overline{S_0}]$ with $S_0=S$ if $m$ is even, $S_0=S'$ if $m$ is odd.
\end{remark}

\section{Tuning} \label{tuning}

The goal of this section is to define renormalization operators for numbers of bounded type, and identify their action on parameter space.

Let $r \in \mathbb{Q} \cap (0,1)$, and let $r = [0; S_0] = [0; S_1]$ respectively the even and odd continued fraction expansion of $r$.
The \emph{tuning window} associated to $r$ is the interval $W_r := [\omega, \alpha)$ with endpoints
$$\omega := [0; S_1 \overline{S_0}],$$
$$\alpha := [0; \overline{S_0}].$$

We can define the \emph{tuning operator} $\tau_r := [0, 1] \setminus \mathbb{Q} \mapsto [0, 1]$ as
$$[0; a_1, a_2, \dots] \mapsto [0; S_1 S_0^{a_1-1}S_1S_0^{a_2-1}\dots]$$
and $\tau_r(0) = \omega$. It is easy to check that the map $x \mapsto \tau_r(x)$ is strictly increasing, and its image is 
contained in $W_r$ (indeed, $\tau_r(\mathcal{E}) = \mathcal{E} \cap W_r$). Moreover, tuning behaves well with respect to the string action:
$$\tau(S\cdot x) = \tau(S) \cdot \tau(x)$$
where $S = (a_1, \dots, a_n)$ is a finite string and $\tau(S) := S_1 S_0^{a_1-1} \dots S_1S_0^{a_n -1}$.

The point $\alpha := [0; \overline{S_0}]$ will be called the \emph{root} of the tuning window. 
A tuning window will be called \emph{maximal} if it is not contained in any other tuning window.
An extremal rational number $r$ will be called \emph{primitive} if $W_r$ is a maximal tuning window.

Let us remark that the period-doubling cascades of Section \ref{extrat} are generated by iteration of 
the tuning map $\tau_{1/2}$. 
For instance, if $g = [0; \overline{1}]$, then $\tau_{1/2}(g) = [0; \overline{2}]$, $\tau^2_{1/2}(g) = [0; \overline{2, 1, 1}]$ and so on, hence
$\tau_{1/2}^n(g)$ is the cascade which tends to $c_F$, and $c_F$ is a fixed point of $\tau_{1/2}$. It is then easy to check that all other period-doubling cascades are tuned images of this one:

\begin{corollary}\label{c:pd} 
For any  extremal rational number $r$, the cascade generated by $r$ is just the sequence $\{\tau_r \tau_{1/2}^n(g), \ n \geq 0 \}$.
\end{corollary}

In this section, we will determine the structure of the tuning windows. Let us start with a preliminary result.

\begin{lemma}\label{prosuf}
Let $r=[0;S_0]=[0;S_1]\in \QQ_E$ and  $y:= [0;  B, S_*, \dots]$ with $B$ a proper suffix of either $S_0$ or $S_1$,
and $S_*$ equal to either $S_0$ or $S_1$. Then $y> [0;S_1]$.
\end{lemma}
\begin{proof}
If $B=(1)$ then there is hardly anything to prove (the first digit of $S_1$ is strictly greater than $1$).
If not, then one of the following is true:
\begin{enumerate}
\item $S_0=AB$ and $A$ is a prefix of $S_1$ as well; 
\item $S_1=AB$ and $A$ is a prefix of $S_0$ as well. 
\end{enumerate}
By Lemma \ref{P45CT1}, in the first case we get that $BA\geq AB=S_0 >> S_1$, while in the latter we get that $BA >> AB=S_1$; so in both cases
$BA >> S_1$ and the claim follows.
\end{proof}

Let us now observe that the structure of $\BB(\omega)$ inside the tuning window is particularly nice:

\begin{lemma} \label{concat}
Let $r \in \mathbb{Q}\cap(0,1)$ be extremal, $W_r = [\omega, \alpha)$. Then 
$$\BB(\omega) \cap [\omega, \alpha] = K(\Sigma)$$
where $K(\Sigma)$ is the regular Cantor set on the alphabet $\Sigma = \{ S_0, S_1 \}$.
This means that $x$ belongs to $B(\omega) \cap [\omega, \alpha]$ if and only if its continued fraction
is an infinite concatenation of the strings $S_0$, $S_1$.
\end{lemma}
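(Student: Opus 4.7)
The plan is to prove the two inclusions of $\BB(\omega) \cap [\omega, \alpha] = K(\Sigma)$ separately, after first locating the extremes of $K(\Sigma)$.

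First, I would record that $\omega = \min K(\Sigma)$ and $\alpha = \max K(\Sigma)$: since $I(S_0)$ and $I(S_1)$ sit on opposite sides of $r$, a concatenation $[0; W_1 W_2 \dots]$ is less than $r$ precisely when $W_1 = S_1$. Iterating and using that $f_{S_1}$ is decreasing while $f_{S_0}$ is increasing yields $\min K(\Sigma) = f_{S_1}(\max K(\Sigma)) = f_{S_1}(\alpha) = \omega$ and $\max K(\Sigma) = f_{S_0}(\alpha) = \alpha$. In particular $K(\Sigma) \subseteq [\omega, \alpha]$.

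For the inclusion $\BB(\omega) \cap [\omega, \alpha] \subseteq K(\Sigma)$ I would iterate a peeling argument. Any $x$ in the left-hand side is irrational (since $\omega > 0$) and hence lies in $I(S_0) \cup I(S_1)$ (whose union covers $[\omega, \alpha]$ except at the common boundary point $r$); write $x = f_{S_i}(y)$. The orbit bound $y = G^{|S_i|}(x) \geq \omega$ comes from $x \in \BB(\omega)$, and the monotonicity of $f_{S_i}$ combined with the identities $f_{S_0}(\alpha) = \alpha$ and $f_{S_1}(\alpha) = \omega$ forces $y \leq \alpha$. Hence $y \in \BB(\omega) \cap [\omega, \alpha]$ and induction produces an infinite coding of $x$ as a concatenation of $S_0$'s and $S_1$'s.

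For the converse inclusion $K(\Sigma) \subseteq \BB(\omega)$ I would analyse $G^k(x)$ for $x = [0; W_1 W_2 \dots]$. If $k$ is a block boundary then $G^k(x) \in K(\Sigma) \subseteq [\omega, \alpha]$; otherwise $G^k(x) = f_Y(\tau)$ where $Y$ is a proper nonempty suffix of some $W_j \in \Sigma$ and $\tau \in K(\Sigma)$. By monotonicity of $f_Y$ the problem reduces to showing $\min(f_Y(\omega), f_Y(\alpha)) \geq \omega$ for every such $Y$. When $Y$ is a suffix of $S_0 = XY$ with $|Y|$ odd, one has $f_Y(\alpha) = [0; \overline{YX}] \geq [0; \overline{XY}] = \alpha$ by the (non-strict) extremality of $S_0$ together with the string lemma \eqref{stringlemma}; an analogous use of the strict extremality of $S_1$ settles the case $Y$ a suffix of $S_1$.

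The main obstacle is the even-parity case, where one must bound $f_Y(\omega) = [0; Y S_1 \overline{S_0}]$ from below by $\omega = [0; S_1 \overline{S_0}]$. Interpreting $f_Y(\omega)$ as a Gauss iterate of a longer element of $K(\Sigma)$ would be circular, so instead I would make the comparison digit by digit in the alternating lex order, exploiting the fact that $S_0$ and $S_1$ share a common prefix of length $\min(|S_0|, |S_1|) - 1$ and differ only in their last one or two positions. This localizes the first place of disagreement between $Y S_1 \overline{S_0}$ and $S_1 \overline{S_0}$ and allows the non-strict extremality of $S_0$ and the strict extremality of $S_1$ to be combined into the required inequality. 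Once established, this together with the peeling inclusion yields the lemma.
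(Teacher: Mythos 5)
Your overall architecture is sound, and the first inclusion matches the paper: the peeling argument for $\BB(\omega)\cap[\omega,\alpha]\subseteq K(\Sigma)$, using $f_{S_0}(\alpha)=\alpha$, $f_{S_1}(\alpha)=\omega$ and the parities of $|S_0|,|S_1|$, is exactly the paper's proof, and your preliminary identification of $\omega=\min K(\Sigma)$ and $\alpha=\max K(\Sigma)$ is correct and useful. The problem is the converse inclusion: the step you yourself flag as ``the main obstacle'' --- showing $f_Y(\omega)=[0;Y S_1\overline{S_0}]\geq\omega$ for $Y$ a proper suffix of even length --- is only described, not proved. Saying that a digit-by-digit comparison ``localizes the first place of disagreement'' and lets the extremality of $S_0$ and $S_1$ ``be combined'' names the tools without exhibiting the inequality; this is the one place in the lemma where extremality has to do real work, and it is missing. (Also, the ``analogous'' odd case for suffixes of $S_1$ hides a genuine step: there $f_Y(\alpha)=[0;Y\overline{S_0}]$ is \emph{not} of the periodic form $[0;\overline{YX}]$, since $S_1=XY\neq S_0$, so the string-lemma computation you give for suffixes of $S_0$ does not transfer verbatim.)

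For comparison, the paper closes this uniformly, with no parity split and no endpoint estimates: writing $G^k(x)=[0;A,B,\dots]$ with $A$ a proper suffix of a block and $B\in\{S_0,S_1\}$ the \emph{actual} next block, it proves $AB>>S_1$, which by property 3 of the order $<<$ gives $G^k(x)>[0;S_1,\overline{S_0}]=\omega$ regardless of $|A|\bmod 2$. The mechanism is: if $S_0=CA$, then either $A=(1)$ (trivial, as $S_1$ starts with a digit $\geq 2$) or $C$ lies in the common prefix of $S_0$ and $S_1$ and hence is a prefix of $B$; weak extremality gives $AC\geq CA=S_0>>S_1$, and $AC$ is a prefix of $AB$. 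Your even case can be filled by the same device: for $Y$ an even suffix of $S_0=XY$ one has $|X|\leq|S_0|-2$, so $X$ is a prefix of $S_1$ and $YS_1$ begins with $YX\geq XY=S_0>>S_1$; for $Y$ an even suffix of $S_1=XY$, strict extremality gives $S_1=XY<YX$ and $YX$ is a prefix of $YS_1=YXY$. So your strategy is workable and close in spirit to the paper's, but until this case is written out the proof is incomplete.
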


\begin{proof}
If $x \in \BB(\omega) \cap [\omega, \alpha]$, either $x = S_0 \cdot y$ for some $y$, or $x = S_1 \cdot y$. In the first case, 
from $x \leq [0; \overline{S_0}]$ and the fact $|S_0|$ is even it follows that $y \leq [0; \overline{S_0}]$. In the second case, from $x \geq [0; S_1, \overline{S_0}]$ and the fact $|S_1|$ is 
odd it also follows that $y \leq [0; \overline{S_0}]$. Moreover, since $x \in \mathcal{B}(\omega)$, the point
$y = G^{k}(x)$ belongs to $\mathcal{B}(\omega)$ and also $y \geq \omega$, hence $y \in \mathcal{B}(\omega) \cap [\omega, \alpha]$ and the inclusion is proved by induction. 

Conversely, if $x \in K(\Sigma)$, then either  $G^k(x)\in  K(\Sigma)$ as well, and hence $G^k(x)\geq \omega=\min  K(\Sigma)$, or $G^k(x) = [0;  B, S_*, \dots]$ with $B$ a proper suffix of either $S_0$ or $S_1$, 
and $S_*$ equal to either $S_0$ or $S_1$; in this latter case $G^k(x)>\alpha_1 = [0; \overline{S_1}] >\omega$ is a consequence of Lemma \ref{prosuf}.
\end{proof}

Note that the previous lemma suggests a possible strategy to prove Theorem \ref{main}(3), by comparing the dimensions of $\mathcal{B}(\alpha)$ and 
$K(\Sigma)$. However, such comparison is not completely trivial as $\mathcal{B}(\alpha)$ is not in general a regular Cantor set. The complete proof of Theorem \ref{main}(3) will be given in Section \ref{S:local}. 

\begin{lemma} \label{induced_order}
Let $r \in \mathbb{Q}\cap (0,1)$ be extremal, and $x, y \in [0, 1] \setminus \mathbb{Q}$. Then
$$G^k(x) \geq y \ \ \forall k \geq 0$$
if and only if 
$$G^k(\tau_r(x)) \geq \tau_r(y) \ \ \forall k \geq 0.$$
\end{lemma}
 
\begin{proof}
Let us set $r=[0;S_0]=[0;S_1]$ and $x=[0;a_1,a_2,a_3,...]$.
Since $\tau_r$ is increasing, $G^k(x) \geq y$ if and only if
$\tau_r(G^k(x)) \geq \tau_r(y)$ if and only if $G^{N_k}(\tau_r(x))
\geq \tau_r(y)$ for $N_k = |S_0|(a_1 + \dots + a_k) +
(|S_1|-|S_0|)k$. 
On the other hand, if $h$ is not of the form $N_k$,
$G^h(\tau_r(x)) = [0; B, S_*, \dots]$ with $B$ a suffix of either $S_0$
or $S_1$, $B \neq S_1$, and $S_*$ equal to either $S_0$ or $S_1$.  By
Lemma \ref{prosuf} it follows immediately that 
$$G^h(\tau_r(x)) \geq [0; S_1] \geq \tau_r(y).$$
The last inequality above is due to the fact that $\tau_r(y) \leq \tau_r(1) = [0; S_1]$.
\end{proof}

Now we can be even more precise and characterize the action of tuning on $\EE$ and $\BB(t)$ in the following way.

\begin{proposition} \label{incl_tuned}
Let $r \in \mathbb{Q}\cap (0,1)$ be extremal, $W_r = [\omega, \alpha)$.
Then, for each $x \in [0, 1]\setminus \mathbb{Q}$ we have
\begin{enumerate}
\item $$\mathcal{E}(\tau_r(x)) = \mathcal{E}(\alpha) \cup \tau_r(\mathcal{E}(x))$$
\item 
$$\mathcal{B}(\tau_r(x)) = \mathcal{B}(\alpha) \cup \bigcup_{S \in \Sigma_\alpha}  S \cdot \tau_r(\mathcal{B}(x))$$
where $\Sigma_\alpha := \{ SS_0^n \ : \ s = [0; S] \textup{ is an }\alpha-\textup{label}, n \in \mathbb{N} \}$ is a countable set of strings.
\end{enumerate}
\end{proposition}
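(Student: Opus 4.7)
I would prove both parts by double inclusion, using three tools already established in the paper: lemma \ref{concat} (which identifies $\mathcal{B}(\omega)\cap W_r$ with the regular Cantor set $K(\Sigma)$ over the alphabet $\Sigma=\{S_0,S_1\}$), the anatomy proposition \ref{anatomyprop}, and the extremality properties of $S_0,S_1$ combined with the string lemma \eqref{stringlemma}.

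For part (1), the inclusion $\mathcal{E}(\alpha)\subseteq\mathcal{E}(\tau_r(x))$ is immediate from $\alpha>\tau_r(x)$, and $\tau_r(\mathcal{E}(x))\subseteq\mathcal{E}(\tau_r(x))$ is checked by verifying $G^k(\tau_r(z))\geq\tau_r(z)$ for every $k$: at the ``block boundaries'' $n_i=|S_1S_0^{a_1-1}\dots S_1S_0^{a_i-1}|$ we have $G^{n_i}(\tau_r(z))=\tau_r(G^i(z))\geq\tau_r(z)$ by monotonicity of $\tau_r$; at intra-block positions the shifted CF begins with a proper suffix of $S_0$ or $S_1$ followed by a concatenation over $\Sigma$, and the extremality of $S_0,S_1$ together with \eqref{stringlemma} forces this to dominate $\tau_r(z)$. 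For the reverse inclusion, a point $y\in\mathcal{E}(\tau_r(x))$ either satisfies $y\geq\alpha$ (placing $y\in\mathcal{E}(\alpha)$) or lies in $W_r$; in the latter case $y\in\mathcal{E}\cap W_r\subseteq\mathcal{B}(\omega)\cap W_r=K(\Sigma)$, so the CF of $y$ is an infinite word over $\Sigma$, and the constraint $G^k(y)\geq y$ forces this word to have the precise ``tuned'' shape $S_1S_0^{b_1-1}S_1S_0^{b_2-1}\dots$, i.e.\ $y=\tau_r(z)$ for a unique $z=[0;b_1,b_2,\dots]$. The inequalities $y\geq\tau_r(x)$ and $G^{n_i}(y)=\tau_r(G^i(z))\geq y$ then translate, via monotonicity of $\tau_r$, to $z\in\mathcal{E}(x)$.

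For part (2) I would split $y\in\mathcal{B}(\tau_r(x))$ according to whether its Gauss orbit ever enters $W_r$. If not, all iterates are $\geq\alpha$ and $y\in\mathcal{B}(\alpha)$. Otherwise let $k_0$ be the first time $G^{k_0}(y)<\alpha$; then $G^{k_0}(y)\in K(\Sigma)$ by lemma \ref{concat}, so its CF starts with an element of $\Sigma$, and successive applications of $G$ peel off one letter of $\Sigma$ at a time. Letting $n$ be the number of leading $S_0$'s, $G^{k_0+n|S_0|}(y)$ begins with $S_1$, and by part (1)'s argument equals $\tau_r(z)$ for some $z\in\mathcal{B}(x)$. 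Writing $y=T\cdot S_0^n\cdot\tau_r(z)$ with $T=(c_1,\dots,c_{k_0})$, the inequalities $G^j(y)\geq\alpha$ for $j<k_0$ (together with $G^{k_0}(y)<\alpha$ and the parities of the involved lengths) translate precisely into $[0;T]$ being an $\alpha$-label, so $TS_0^n\in\Sigma_\alpha$. The reverse inclusion is verified directly: for $y=TS_0^n\cdot\tau_r(z)$, iterates landing inside $T$ remain $\geq\alpha>\tau_r(x)$ by the $\alpha$-label property, iterates inside $S_0^n\cdot\tau_r(z)$ remain in $K(\Sigma)\subseteq[\omega,\alpha]$ so they are $\geq\omega\geq\tau_r(x)$, and iterates of $\tau_r(z)$ are $\geq\tau_r(x)$ by part (1).

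\textbf{Main obstacle.} The trickiest ingredient is the intra-block analysis in part (1) and the companion $\alpha$-label decomposition in part (2). In both cases one needs a uniform inequality of the form: a CF whose initial word is a proper suffix (or nontrivial rotation) of $S_0$ or $S_1$ followed by an infinite concatenation over $\Sigma$ dominates $\tau_r(z)$, respectively $\alpha$. This relies on the weak extremality $XY\leq YX$ for every splitting $S_0=XY$, and its strict analogue for $S_1$; translating these symbolic inequalities into numerical ones for the Gauss orbit requires careful bookkeeping of parities, since the direction of the alt-lex order flips with the parity of the string length. A secondary subtlety is showing that the factorization $S=TS_0^n$ in $\Sigma_\alpha$ is unique and exhaustive, i.e., that the trailing $S_0^n$ accounts for all possible ways a $\mathcal{B}(\tau_r(x))$-orbit may sit inside $W_r$ before matching the tuned expansion of $z$.
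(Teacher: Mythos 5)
Your proposal follows essentially the same route as the paper. The paper packages your ``block boundary / intra-block'' analysis into a separate statement (lemma \ref{induced_order}: $G^k(x)\geq y$ for all $k$ iff $G^k(\tau_r(x))\geq \tau_r(y)$ for all $k$, proved by the suffix argument of lemma \ref{concat}), and then both parts of the proposition reduce to lemma \ref{concat}, lemma \ref{induced_order}, and the decomposition of $[0,1]\setminus\BB(\alpha)$ into $\alpha$-gaps from proposition \ref{anatomyprop}. Your first-entry-time identification of the $\alpha$-label prefix in part (2) is equivalent to the paper's more direct move: $y\notin\BB(\alpha)$ lies in a unique $\alpha$-gap $V_s$, which immediately exhibits $y=S\cdot y'$ with $s=[0;S]$ an $\alpha$-label and $y'\in[\omega,\alpha)$; this spares you the parity bookkeeping you flag as an obstacle, since lemmas \ref{one}--\ref{three} have already done it.

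One step in your reverse inclusion for part (2) is written backwards. You justify the iterates falling inside the $S_0^n\cdot\tau_r(z)$ block by ``$\geq\omega\geq\tau_r(x)$'', but $\omega=\tau_r(0)<\tau_r(x)$ because $\tau_r$ is increasing and $x>0$, so a lower bound by $\omega$ proves nothing. The correct bounds are the ones you already invoke in part (1): at $S_0$-block boundaries the iterate has continued fraction beginning with $S_0$, hence lies in $I(S_0)$ and is $> r \geq \tau_r(x)$ (as $\tau_r(x)$ begins with $S_1$ and $I(S_1)$ lies to the left of $I(S_0)$, which share the endpoint $r$); at intermediate positions the suffix argument gives $AB>>S_1$, so the iterate exceeds $S_1\cdot w$ for \emph{every} $w\in(0,1)$, in particular exceeds $\tau_r(x)$. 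This is precisely the content of lemma \ref{induced_order}, so the repair is immediate; with it, the proposal is correct.
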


\begin{proof}
1. Let $y \in \mathcal{E}(\tau_r(x))$. Either $y \in \mathcal{E} \cap [\alpha, 1] = \mathcal{E}(\alpha)$, or $y \in \mathcal{E} \cap 
[\tau_r(x), \alpha)$, hence $y \leq [0; \overline{S_1}]$. In the second case, $y \in \mathcal{B}(\omega)$ hence, by Lemma \ref{concat}, 
the c.f. expansion of $y$ is a concatenation of the strings $S_0$ and $S_1$ and begins with $S_1$, therefore $y = \tau_r(v)$ for some 
$v \in [0, 1]\setminus \mathbb{Q}$. Since $G^n(y) \geq y \geq \tau_r(x)$ for all $n$, 
then by Lemma \ref{induced_order} $G^n(v) \geq v \geq x$, so $v \in \mathcal{E}(x)$. Viceversa, if $v \in \mathcal{E}(x)$, then
$G^n(v) \geq v \geq x$ for all $n$, hence, by Lemma \ref{induced_order}, $\tau_r(v) \in \mathcal{E}(\tau_r(x))$.

2. Let us assume that
$$\textup{(a) }y \in \BB(\tau_r(x))\quad  \textup{ but }\quad \textup{(b) } y \notin \BB(\alpha);$$
from (b) it follows that $y$ is in some $\alpha$-gap, therefore
$y=S\cdot y'$ for some $\alpha$-label $s=[0;S]$ and $y'<\alpha$. On
the other hand, by (a) it follows that
$G^{|S|}(y)=y'\in \BB(\tau_r(x))\cap [\omega,\alpha]$,
therefore by Lemma \ref{concat} $y'=[0;Y]$ with $Y$ an
infinite concatenation of the words $S_0, S_1$ and hence
$y' = S_0^n\cdot y''$ for some $n \in \NN$ and $y'' = \tau_r(v)$
for some $v \in (0, 1) \cap (\mathbb{R} \setminus \mathbb{Q}) \cup \{0 \}$.
By Lemma \ref{induced_order}, $v \in \BB(x)$, hence $y = SS_0^n\cdot y''$ with $y'' \in \tau_r \BB(x)$.

On the other hand, suppose $y \in S S_0^n \cdot \tau_r \BB(x)$ with $s=[0;S]$ an $\alpha$-label, and let $k \geq 0$.
Then for $k < |S|$ we have that $G^k(S\cdot y)$ lies between $G^k(s)$ and $G^k(S\cdot \alpha)$, which are both above $\alpha$.
For $|S| \leq k < |S| + n |S_0|$, the c.f. expansion of $G^k(y)$ starts with a prefix of $S_0$, hence it is larger than $\alpha$ by Lemma \ref{prosuf}.
Finally, for $k \geq |S| + n |S_0|$ we have $G^k(y) = G^{h}(\tau_r (z))$ with $h = k - |S| + n |S_0|$ and $z \in \BB(x)$, hence by Lemma \ref{induced_order}  
$$ G^k(y) \geq \tau_r(x).$$
The three cases imply that $y \in \BB(\tau_r(x))$. 

Finally, $\tau_r(x) \leq \alpha$ implies $\BB(\alpha) \subseteq \BB(\tau_r (x))$, completing the proof.
\end{proof}

\begin{corollary} \label{dimtuned}
Let $r \in \mathbb{Q} \cap (0,1)$ be extremal, $W_r = [\omega, \alpha)$, and $x \in [0, 1] \setminus \mathbb{Q}$. Then we have
\begin{equation} \label{inclB}
\textup{H.dim }\mathcal{B}(\tau_r(x)) = \max\{ \textup{H.dim }\mathcal{B}(\alpha), \textup{H.dim }\tau_r(\mathcal{B}(x)) \}
 \end{equation}
 and
\begin{equation} \label{inclE}
\textup{H.dim }\mathcal{E}(\tau_r(x)) = \max\{ \textup{H.dim }\mathcal{E}(\alpha), \textup{H.dim }\tau_r(\mathcal{E}(x)) \}.
 \end{equation}
\end{corollary}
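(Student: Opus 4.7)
The plan is to deduce both equalities directly from Proposition \ref{incl_tuned}, using only standard properties of the Hausdorff dimension. Recall that for any countable collection of sets $\{A_j\}$, one has $\textup{H.dim }\bigcup_j A_j = \sup_j \textup{H.dim } A_j$, and that bi-Lipschitz maps preserve Hausdorff dimension. I will treat the two identities separately.

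For equation \eqref{inclE}, Proposition \ref{incl_tuned}(1) writes $\mathcal{E}(\tau_r(x))$ as a union of two pieces, so the countable-union formula applied to this finite union immediately gives
$$\textup{H.dim }\mathcal{E}(\tau_r(x)) = \max\{ \textup{H.dim }\mathcal{E}(\alpha), \textup{H.dim }\tau_r(\mathcal{E}(x))\},$$
which is \eqref{inclE}. No further work is required.

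For equation \eqref{inclB}, Proposition \ref{incl_tuned}(2) expresses $\mathcal{B}(\tau_r(x))$ as the union of $\mathcal{B}(\alpha)$ and the countable family of images $S\cdot \tau_r(\mathcal{B}(x))$ for $S\in\Sigma_\alpha$. By the countable-union property,
$$\textup{H.dim }\mathcal{B}(\tau_r(x))=\max\Bigl\{\textup{H.dim }\mathcal{B}(\alpha),\ \sup_{S\in\Sigma_\alpha}\textup{H.dim }\bigl(S\cdot\tau_r(\mathcal{B}(x))\bigr)\Bigr\}.$$
Now I use the contraction estimate \eqref{contraction}: for each finite string $S$, the map $f_S(y)=S\cdot y$ satisfies $\tfrac{1}{2q(S)^2}\leq |f_S'(y)|\leq \tfrac{1}{q(S)^2}$ on $[0,1]$, so $f_S$ is bi-Lipschitz on its image and therefore preserves Hausdorff dimension. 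Consequently $\textup{H.dim }\bigl(S\cdot\tau_r(\mathcal{B}(x))\bigr)=\textup{H.dim }\tau_r(\mathcal{B}(x))$ for every $S\in\Sigma_\alpha$, and the supremum over $S$ collapses to this common value. This yields exactly \eqref{inclB}.

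There is essentially no obstacle here: the entire content is packaged in Proposition \ref{incl_tuned}, and the corollary is a mechanical application of the two classical facts above. The only mildly subtle point — that the countable union in part (2) is not just a finite union — is handled by the $\sup$ formulation, together with the observation that all members of the family $\{S\cdot\tau_r(\mathcal{B}(x))\}_{S\in\Sigma_\alpha}$ have the same dimension thanks to the bi-Lipschitz nature of $f_S$.
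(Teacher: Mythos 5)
Your proof is correct and is exactly the intended argument: the paper states the corollary without proof precisely because it follows from Proposition \ref{incl_tuned} via countable stability of Hausdorff dimension and the bi-Lipschitz invariance supplied by the derivative bounds \eqref{contraction}. Nothing is missing.
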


\begin{remark}
We will actually see in Section \ref{dimension} that it is possible to determine precisely which term attains the maximum, i.e.
$$\textup{H.dim } \EE(\tau_r(x)) = \left\{ \begin{array}{ll} \textup{H.dim } \tau_r\EE(x) & \textup{if }\alpha > c_F \\
                                             \textup{H.dim } \EE(\alpha) & \textup{if } \alpha < c_F 
                                            \end{array} \right.$$
and                                       
                                            $$\textup{H.dim } \BB(\tau_r(x)) = \left\{ \begin{array}{ll} \textup{H.dim } \tau_r\BB(x) & \textup{if }\alpha > c_F \\
                                             \textup{H.dim } \BB(\alpha) & \textup{if } \alpha < c_F, 
                                            \end{array} \right.$$
where $c_F$ is the fixed point of the tuning operator $\tau_{1/2}$.
\end{remark}

\subsection{Scaling and H\"older-continuity at the Feigenbaum point} \label{feigenbaum}

The tuning operator is useful to understand the scaling law of the Hausdorff dimension at the ``Feigenbaum point'' $c_F$. 
In this section, we will use it to establish Theorem \ref{th:scaling}. 

Let $\alpha:=[0;\overline{2,1}]$, $\tau := \tau_{1/2}$,
$\alpha_n:=\tau^n(\alpha)$ and $g_n:=\tau^n(g)$. 
Note that $\BB(g) = \{g \}$ is a singleton; thus, applying Proposition \ref{incl_tuned} inductively shows that $\BB(g_n)$ is countable, hence
$\textup{H.dim }\BB(g_n) = 0$. Using Corollary \ref{dimtuned} with $r = \tau^n(1)$,  we then
get that
\begin{align*}
\textup{H.dim } \BB(\alpha_n) & = \max \{ \textup{H.dim }\BB(g_n), \textup{H.dim } \tau^n(\BB(\alpha))\} \\
& = \textup{H.dim } \tau^n(\BB(\alpha)).
\end{align*}

For $\alpha=[0;\overline{2,1}]$ we know that $\BB(\alpha)=K(\{(1), (2)\})$ is a regular Cantor set, therefore all its tuned images will be regular Cantor sets as well. 
Indeed, let us set
$$ \left\{
\begin{array}{l}
Z_0:=(1)\\
Z_1:=(2) \\
Z_{n+1}:= Z_n Z_{n-1} Z_{n-1} \textup{ if } n\geq 1.
\end{array}
\right.
$$
Using  induction one can easily prove the following facts:
\begin{itemize}
\item  $Z_{n+1}=Z_nZ'_n$ where $Z'_n$ denotes the conjugate string of $Z_n$ (i.e. $[0;Z_n]=[0;Z'_n])$;  therefore, setting $q_n := q(Z_n)=q(Z'_n)$, equation  \eqref{eq:supermult} implies that 
\begin{equation}\label{quenne}
q_n^2 \leq q_{n+1} \leq 2 q_n^2;
\end{equation}
\item $\tau(Z_n)=Z_{n+1}$, hence
$\tau^n(\BB(\alpha))=K_n$  with $K_n:=K(\{Z_n, Z_{n+1}   \});$
\item $K_n$ is a disjoint union of its self similar copies: $K_n=(Z_n \cdot K_n) \sqcup (Z_{n+1} \cdot K_n)$. 
\end{itemize} 
Using this information together with equation \eqref{contraction} we get
\begin{align*}
\frac{1}{2 q_{n+1}} & \leq |f'_{Z_n}(x)| \leq \frac{1}{q_n^2}, \\
\frac{1}{(2 q_{n+1})^2} 
& \leq |f'_{Z_{n+1}}(x)|  \leq \frac{1}{q_n^4},
\end{align*}
and standard Hausdorff dimension estimates (eq. \eqref{dimbounds}) then yield
\begin{equation}\label{eq:HD}
\frac{\log \frac{\sqrt{5}+1}{2}}{\log q_{n+1} +\log 2} \leq \textup{H.dim }K_n\leq \frac{ \log \frac{\sqrt{5}+1}{2}}{\log q_{n+1} -\log 2}.
\end{equation}
On the other hand, it is easy to check that 
$$r_1:=[0;Z_2Z_1Z_2]=[0;S_1S_0S_1S_1S_0]=[0;S_1S_0S_1S_1S_1]$$ 
is the pseudocenter of $[\alpha_1, c_F]$, being a convergent of both endpoints of the interval. 
Using the tuning operator $\tau$ (which has $c_F$ as fixed point) we get that $r_n=[0;Z_{n+1}Z_nZ_{n+1}]=[0;Z_{n+1}Z_nZ'_{n+1}]$ is a common convergent of the endpoints of $[\alpha_n, c_F]$. This means that
\begin{equation}\label{eq:upperlower}
\alpha_n=(Z_{n+1}Z_nZ_{n+1})\cdot \tilde{\alpha}_n, \ \ \ c_F=(Z_{n+1}Z_nZ_{n+1}')\cdot \tilde{c}_n,
\end{equation} 
for some $\tilde{\alpha}_n, \tilde{c}_n \in K(\{(1), (2)\})$. Of course $q(Z_{n+1}Z_nZ_{n+1})=q(Z_{n+1}Z_nZ'_{n+1})$; moreover, using equation \eqref{eq:supermult} together with the estimate \eqref{quenne} we get that
\begin{equation}
\frac{1}{2}q_{n+1}^5 \leq q(Z_{n+1}Z_nZ_{n+1})^2 \leq 16 q_{n+1}^5.
\end{equation}
Since $K(\{(1), (2)\}) \subset [1/3,1]$ and $f_{Z_{n+1} Z_n Z_{n+1}}(r_n) = 0$,  equations \eqref{contraction} and \eqref{eq:upperlower} 
yield the same upper and lower estimates for both $r_n-\alpha_n$ and $c_F-r_n$, namely
$$\frac{1}{96q_{n+1}^5} \leq r_n-\alpha_n \leq \frac{2}{q_{n+1}^5}, \ \ \  \ \ \ \ \ \ \ \ 
\frac{1}{96q_{n+1}^5} \leq c_F-r_n \leq \frac{2}{q_{n+1}^5}. $$
Thus, writing $c_F-\alpha_n= (c_F-r_n)+( r_n-\alpha_n)$ we finally get 
\begin{equation}\label{eq:distance}
\frac{1}{48 q_{n+1}^5} \leq c_F-\alpha_n \leq \frac{ 4}{ q_{n+1}^5}.
\end{equation}  
 Equations \eqref{eq:HD} and \eqref{eq:distance} show the following scaling law for $\BB(t)$ at $c_F$: 
\begin{equation} \label{scaling law}
\lim_{n \to +\infty} \textup{H.dim }\BB(\alpha_n) \log\left(\frac{1}{|c_F-\alpha_n|}\right)= 5 \log \frac{\sqrt{5}+1}{2}.
\end{equation}
As a consequence, the map $\eta(t) = \textup{H.dim }\BB(t)$ is not H\"older-continuous at $c_F$.

One can use this estimate to get a result on the modulus of continuity of the dimension function $\eta$: for $ \alpha_n \leq t \leq \alpha_{n+1}$
one has $\eta(\alpha_{n+1})\leq \eta(t) \leq \eta(\alpha_n)$, and we get for $t\to c_F^-$
$$\frac{5}{2} \log \frac{\sqrt{5}+1}{2} +o(|c_F-t|) \leq \eta(t) \log\frac{1}{c_F-t} \leq 10 \log \frac{\sqrt{5}+1}{2}+o(|c_F-t|),$$
which completes the proof of Theorem \ref{th:scaling}. 

Finally, let us note that one cannot do much better than this, as the product $\eta(t) \log\frac{1}{c_F-t} $ does not converge as $t\to c_F$. Indeed, if we set 
$\beta_n:=\tau^n([0;\overline{3}])$ we can use the same method as before to check that $c_F-\beta_n$ goes to $0$ at a rate of $q_{n+1}^{-3}$ while 
$\eta(\beta_n)=\eta(\alpha_n)$, so that
$$
\lim_{n \to +\infty} \textup{H.dim }\BB(\beta_n) \log\left(\frac{1}{|c_F-\beta_n|}\right)= 3 \log \frac{\sqrt{5}+1}{2}.
$$
which is not the same scaling law as in \eqref{scaling law}. 

\begin{figure}[ht] \label{feigenpicture}
  \centering
\includegraphics[scale=1]{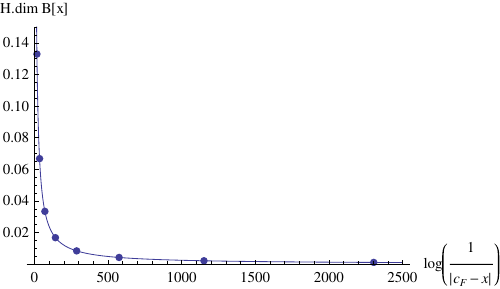}
\caption{Relation between Hausdorff dimension and distance from $c_F$. The dots correspond to successive renormalizations $\alpha_n$, $3 \leq n \leq 10$, 
while the curve is $xy = 5 \log\frac{\sqrt{5}+1}{2}$.}
\end{figure}

\section{Dominant points} \label{dominant}

\begin{definition}
A finite string $S$ of positive integers and even length is called \emph{dominant} if 
$$XY  < <   Y$$
for every splitting $S = XY$ where $X$, $Y$ are finite, nonempty strings.
\end{definition}

Let us remark that every dominant string is extremal, but not viceversa.
For instance, the strings $(5,2,4,3)$ and $(5,2,4,5)$ are both extremal, but the first is dominant while the second is not.

\begin{definition}
A point $x \in \mathcal{E}$ is \emph{dominant} if there exists a dominant word $S$ such that
$x = [0; \overline{S}]$.
\end{definition}

We define the set $UT$ of \emph{untuned parameters} as the complement in $\mathcal{E}$ of all tuning windows: 
$$UT := \mathcal{E} \setminus \bigcup_{r \in \mathbb{Q}_E} W_r.$$
The importance of dominant points lies in the fact that they approximate all untuned parameters:

\begin{proposition} \label{densitydominant}
The set of dominant points is dense in $UT \setminus \{g\}$. 
 More precisely, every parameter in $UT \setminus \{g\}$ is accumulated from the right 
by dominant parameters.

\end{proposition}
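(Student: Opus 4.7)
The plan is to show that for every $\varepsilon>0$ one can find a dominant point $y$ with $x<y<x+\varepsilon$. Writing $x=[0;a_1,a_2,\dots]$, I first note that $x\in\EE\setminus\{g\}$ forces $a_1\geq 2$, since $g=[0;\overline{1}]$ is the unique element of $\EE$ beginning with the digit $1$. I also note that $x$ is a right accumulation point of $\EE$ itself: otherwise some interval $(x,x+\delta)$ would be contained in a complementary gap $I_r=(\alpha_r^-,\alpha_r^+)$ of $\EE$ with $x=\alpha_r^-$, making $x$ an isolated point of $\EE$; but Section \ref{extrat} shows that every isolated point of $\EE$ lies inside a tuning window, hence is excluded from $UT$.

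Since periodic points are dense in $\EE$, I can pick a quadratic irrational $y=[0;\overline{T}]\in\EE$ with $T$ extremal of even length and $x<y<x+\varepsilon$. The substantive step is then to show that, provided $y$ is chosen close enough to $x$ (equivalently, $|T|$ sufficiently large), the string $T$ must in fact be dominant.

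I argue this by contradiction. If $T$ is extremal but not dominant, there is a splitting $T=XY$ such that $(XY)_1^i\not<_{\mathrm{alt\,lex}}Y_1^i$ for every $i\leq|Y|$. Reconciling this weak inequality with extremality ($XY<_{\mathrm{alt\,lex}}YX$) position by position forces $X$ to begin with the whole of $Y$, so that $T=Y\cdot X'\cdot Y$ for some $X'$; iterating this self-similarity expresses $T$ as a concatenation of two strings $Z_0,Z_1$ realising the two continued-fraction expansions of a single extremal rational $r=[0;Z_0]=[0;Z_1]$, and hence $T$ lies in the image of the tuning operator $\tau_r$. By Proposition \ref{incl_tuned}, $y=\tau_r(y')$ for some $y'\in\EE$, placing $y$ inside the tuning window $W_r=[\omega_r,\alpha_r)$.

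To close the contradiction with $x\in UT$, I need $x$ itself (not merely $y$) to lie in $W_r$. For this I would show that $y-\omega_r$ is comparable to the diameter $\alpha_r-\omega_r$, which by the estimates of Section \ref{notation} is of order $q(Z_1)^{-2}$; choosing $T$ to be, say, the shortest extremal word producing a point above $x$ with the required precision forces the denominator of the extracted $r$ to be controlled by $|T|$, and for small enough $\varepsilon$ the left endpoint $\omega_r$ must then drop below $x$, yielding $x\in W_r$. The main technical obstacle is exactly this quantitative step: ruling out the possibility that, as $y\searrow x$, the tuning window $W_r$ extracted from non-dominance of $T$ shrinks so quickly that $\omega_r$ slips strictly between $x$ and $y$, leaving $x$ outside $W_r$. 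I expect this to be handled by a careful minimal-length choice of $T$ ensuring that $r$ has denominator of the same order as $q(T)$, so that $W_r$ is at least as wide as the interval separating $y$ from $x$.
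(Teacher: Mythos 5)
There is a genuine gap at the heart of your contradiction argument: the implication ``$T$ extremal but not dominant $\Rightarrow [0;\overline{T}]$ lies in a tuning window'' is false, and the proposition cannot be proved along these lines. What non-dominance of an extremal word $T$ actually gives you (and your position-by-position reconciliation correctly shows this much) is a splitting $T=XY$ in which $Y$ is simultaneously a prefix and a suffix of $T$, i.e.\ $Y\in PS(T)$ in the paper's notation. But such a self-overlap does not iterate into a factorization of $T$ over a two-letter alphabet $\{S_0,S_1\}$ coming from the two expansions of a single rational: $Y$ need not occur disjointly (one may have $|X|<|Y|$), and even when $T=YX'Y$ the remaining piece has no reason to decompose into $\{S_0,S_1\}$-blocks. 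The paper's own example makes this concrete: $T=(5,2,4,5)$ is extremal and not dominant (for the splitting $X=(5,2,4)$, $Y=(5)$, the suffix $Y$ is also a prefix), yet $[0;\overline{5,2,4,5}]$ belongs to $\EE\cap UT$ --- its expansion cannot be written as $S_1$ followed by a concatenation of $S_0$'s and $S_1$'s for any extremal rational $r$, as one checks for each candidate odd-length prefix $S_1$. So non-dominant extremal periodic points do occur inside $UT$ and arbitrarily close to points of $UT$; your contradiction never gets off the ground, and the quantitative worry you flag at the end (whether $\omega_r$ slips strictly between $x$ and $y$) is moot because the qualitative step before it already fails.

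The paper's proof is structured quite differently precisely because of this. It first reduces, by density of the roots of maximal tuning windows, to approximating a root $\alpha=[0;\overline{S}]$ with $S$ extremal of even length; when $S$ is not dominant it does not try to rule that case out, but instead runs an induction on $|S|$, using the prefix--suffix structure ($PS(S)$, the residual suffixes, and Lemma \ref{largerthanprim}) to produce a shorter dominant word $T$ with $S<<T$ and $[0;\overline{T}]<[0;\overline{Z}]$ for every residual suffix $Z$, and then verifies (Lemma \ref{inductivestep}) that the explicitly built words $S^nT^m$ are dominant and yield points converging to $\alpha$ from the right. To salvage your approach you would have to replace the claim ``every sufficiently close even extremal periodic point is dominant'' (which is false) by an explicit construction of \emph{some} dominant approximating sequence --- which is essentially what the paper's $S^nT^m$ construction accomplishes.
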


The proof of such a fact will be given in Section \ref{proofdensity}: we can now see how, by using
the density of dominant points, one can prove Proposition \ref{Lipschitz} 
stated in the introduction: 

\begin{proposition}
Given any point $x \in UT$ and any $y > x$, then $\mathcal{E}(x)$ contains a Lipschitz image of $\mathcal{B}(y)$. 
More precisely, there exists a finite string $S$ such that $S \cdot \BB(y) \subseteq \EE(x)$.
\end{proposition}

\begin{proof}
If $x = g$, then $\mathcal{B}(y) = \emptyset$ and the claim is trivial. Otherwise, by Proposition \ref{densitydominant},
 $x$ is accumulated from the right by dominant points, hence one can choose $z \in (x, y)$ a dominant point $z = [0; \overline{S}]$ 
such that $S$ is a prefix of $x$ but not of $y$. We claim that 
$$S \cdot \mathcal{B}(y) \subseteq \mathcal{E}(z).$$
Indeed, let $v \in \mathcal{B}(y)$, $w := S \cdot v$, and $k \geq 0$. Since $v \geq y \geq z$ and $S$ is even, $w = S \cdot v \geq S \cdot z = z$. 
If $k < |S|$, then there exists a decomposition $S = S'S''$ such that $G^k(w) = S'' \cdot v > S \cdot v = w$ because $S$ is dominant. On the other hand, 
if $k \geq |S|$ then $G^k(w) = G^h(v) \geq y$ for $h := k - |S|$ and $y > S \cdot v = w$ because $S$ is not a prefix of $y$, hence $G^k(w) > w$.

As a consequence, the set $S\cdot \mathcal{B}(y) \subseteq \EE(z) \subseteq \EE(x)$ is a Lipschitz image of $\mathcal{B}(y)$ inside $\EE(x)$. 
\end{proof}

\subsection{Density of dominant points} \label{proofdensity}

In order to prove Proposition \ref{densitydominant}, we will need the following definitions:
given a string $S$, the set of its \emph{prefixes-suffixes} is 
$$\begin{array}{lll} PS(S) & := & \{ Y :  Y \textup{ is both a proper prefix and a suffix of }S\} = \\
   & = & \{ Y : \ 0<|Y|<|S| , \ \exists \ X, Z\textup{ s.t. } S = XY = YZ \}.
  \end{array}$$
Moreover, we have the set of \emph{residual suffixes}
$$RS(S) := \{ Z \ : \ S = YZ, \ Y \in PS(S) \}.$$

\begin{lemma} \label{prefsuffisodd}
Let $S$ be an extremal word and $Y \in PS(S)$, then 
\begin{enumerate}
\item $|Y|<|S|-1$; 
\item
$Y$ is an extremal word of odd length. 
\end{enumerate}
\end{lemma}

\begin{proof}
(1) This claim is almost trivial, because if $|Y|=|S|-1$ then $S$ is a sequence
repeating one single figure, and since $S$ is extremal this cannot happen. 

(2) Suppose $S = XY = YZ$. Then by extremality $XY < YX$, hence $XYY <
YXY$ and, by substituting $YZ$ for $XY$, $YZY < YYZ$.  If $|Y|$ were
even, it would follow that $ZY < YZ$, which contradicts the
extremality of $S = YZ < ZY$. Hence $|Y|$ is odd.  Suppose now $Y =
AB$, with $A$ and $B$ non-empty strings. Then $S = XAB < BXA$. By
considering the first $k := |Y|$ characters on both sides of this
equation, $Y = AB = (S)_1^k \leq (BXA)_1^k = BA$, where in the last equality we used that
$S = YZ = ABZ$. In fact the
preceding inequality is strict because if $Y = AB = BA$, then $Y =
P^k$ for some string $P$ and $k\geq 2$, and this would mean that
$P^2\in PS(S)$, which is impossible since $P^2$ has even length.
\end{proof}

\begin{lemma}\label{largerthanprim}
Let $S$ be an extremal word of even length such that $\gamma = [0; \overline{S}] \in UT\setminus\{g\}$, and let $Y \in PS(S)$, $S = YZ$. 
Let $\alpha(Y)$ be the root of the maximal tuning window $W_r$ which contains $[0; Y]$. Then 
\begin{equation}\label{eq:claim}
[0; \overline{Z}] > \alpha(Y).
\end{equation}
\end{lemma}

\begin{proof}
Let $r = [0; S_0] = [0; S_1]$ be the expansions of $r$ in c.f. of even and odd length, respectively. 
Since $[0;\overline{Y}]\in \EE\cap W_r$
and $|Y|$ is odd, then $Y$ is a concatenation of the strings $S_0$ and $S_1$; we claim
\begin{equation} \label{E:concat}
\beta := [0; \overline{ZY}] > \alpha(Y) = [0; \overline{S_0}].
\end{equation}
Indeed, suppose $\beta \leq \alpha(Y)$; since the c.f. expansion of $Y \cdot \alpha(Y)$ is a concatenation of strings 
$S_0$ and $S_1$, then $Y \cdot \alpha(Y) \geq [0; S_1, \overline{S_0}] = \omega(Y)$ where $\omega(Y)$ is the lower endpoint of the tuning window $W_r$.
Hence, from $\beta \leq \alpha(Y)$, then $\gamma = Y \cdot \beta \geq Y \cdot \alpha(Y) \geq \omega(Y)$.
On the other hand, since $S = XY$ is extremal, then 
$XY < YX$, hence also 
$$S Y = XYY < YXY = YS$$
so, by the string lemma (Lemma \ref{stringlemma}), we have $\gamma = [0; \overline{S}] < [0; \overline{Y}] < \alpha(Y)$,
so that $\gamma \in W_r$, which contradicts the hypothesis that $\gamma$ is untuned. Therefore, we have proved \eqref{E:concat}.

Now, if \eqref{eq:claim} did not hold, then $[0; \overline{Z}] \leq [0;\overline{S_0}] < [0;\overline{ZY}]$, thus $Z$ should be a prefix of 
$\overline{S_0}$, since both $[0; \overline{Z}]$ and $[0;\overline{ZY}]$ have c.f. starting with $Z$, and $[0; \overline{S_0}]$ lies between them. Then we can write $Z = S_0^k V$ with $V$ prefix of $S_0$, $V \neq \emptyset$ since $|Z|$ is odd. If $S_0 \neq (1, 1)$, 
then $S_0$ is extremal and we can write $S_0 = V W$ for some string $W$. Then, since $S_0$ is extremal, we have 
$$VW < WV$$ and since $V$ has odd length this implies 
$$S_0 V = VWV < VVW = V S_0$$
and, by appending $S_0^{k}$ (which has even length) on both sides we get 
$$S_0^{k+1} V < S_0^{k} V S_0.$$
Hence, by the string lemma $[0; \overline{Z}] = [0; \overline{S_0^k V}] > [0; \overline{S_0}]$, contradiction. On the other hand, if
$S_0 = (1, 1)$ one would conclude that $Z$ is made up by 1's, which is impossible as well (since $Z\in RS(S)$, hence $Y$ would also be made up of all 1's).
\end{proof}

\begin{lemma} \label{newstringlemma}
Let $R$ be a string and $\rho=[0;\overline{R}]$ be the fixed point of the contraction $x\mapsto R \cdot x$. 
Then 
$$ R\cdot t > t  \ \ \mbox{ for all } \ 0\leq t <\rho.$$
\end{lemma}

\begin{proof}
Set $\varphi(x):= R\cdot x - x$; since  $x\mapsto R \cdot x$ is a contraction, $\varphi'(x)<0 \ \forall x\in (0,1)$ and so 
$\varphi(t)>\varphi(\rho)=0$ for all $t<\rho$.
\end{proof}

\begin{lemma}\label{inductivestep}
Let $S$ be an extremal word of even length, $S\neq (1,1)$ and $T$ be either a dominant word, or $T=(1,1)$. Suppose moreover that
\begin{enumerate}
  \item  $S  < <   T$; and 
  \item $\overline{T} < \overline{Z}$ $\ \forall Z \in RS(S)$.
\end{enumerate}
Then for any $n \geq 1$ and for $m$ sufficiently large, $S^n T^m$ is a dominant word. 
\end{lemma}

\begin{proof}
\begin{enumerate}
 \item From 1. we obtain
$$S^n T^m  < <   T^a, \quad a \geq 1$$
$$S^n T^m  < <   S^b T^m, \quad b < n.$$
\item If $S = XY$, $XY  < <   YX$ by extremality, hence
$$S^n T^m  < <   Y S^b T^m \quad \forall b \geq 1.$$
\item If  $T$ is dominant, $T  < <   U$ whenever $T = QU$, thus
$$S^n T^m  < <   T  < <   U.$$
The same conclusion holds if $T=(1,1)$, since the first digit of $S$ is strictly greater than $1$.
\item One is left to prove that $S^n T^m  < <   Y T^m$ whenever $S = XY$.
If $Y \notin PS(S)$, then $XY  < <   Y$ and the proof is complete. Otherwise, 
$S = XY = YZ$, $|Y| \equiv 1 \mod 2$ by Lemma \ref{prefsuffisodd}. Moreover, since 
$YZ < ZY$, by the string lemma (Lemma \ref{stringlemma}) we obtain $[0; \overline{ZS^{n-1}}] > [0; \overline{Z}]$, hence 2. 
implies $[0; \overline{T}] < [0; \overline{Z S^{n-1}}]$. So, setting $R=Z S^{n-1}$ in Lemma \ref{newstringlemma}, we get
$[0; Z S^{n-1} \overline{T}] > [0; \overline{T}]$, hence for $m$ large enough
$Z S^{n-1} T^m  >>  T^m$ and then 
$$S^nT^m  < <   Y T^m.$$ 
\end{enumerate}
\end{proof}

\noindent \textit{Proof of Proposition \ref{densitydominant}.}
By density of the roots of the maximal tuning windows, it is enough to prove that every $\alpha$ which is root of a maximal tuning window, 
$\alpha \neq g$, can be approximated from the right by dominant points. Hence we can assume $\alpha = [0; \overline{S}]$, 
$S$ an extremal word of even length, and $1$ is not a prefix of $S$. If $S$ is dominant, a sequence of approximating dominant points is 
given by $[0;\overline{ S^n, 1, 1}]$, $n \geq 1$. The rest of the proof is by induction on $|S|$. If $|S| = 2$, then $S$ itself is dominant and we are in the previous case.
If $|S| > 2$, either $S$ is dominant and we are done, or $PS(S) \neq \emptyset$ and also $RS(S) \neq \emptyset$. Let us choose $Z_\star \in RS(S)$ 
such that $$[0; \overline{Z_\star}] := \min \{ [0; \overline{Z}] \ : \ Z \in RS(S) \}$$
and $Y_\star \in PS(S)$ such that $S = Y_\star Z_\star$. Let $\alpha(Y_\star)$ be the root 
of the maximal tuning window $[0; Y_\star]$ belongs to. Then by Lemma  \ref{largerthanprim},  
$[0; \overline{Z_\star}] > \alpha(Y_\star)$, and by minimality
$$\alpha(Y_\star) < [0; \overline{Z}] \quad \forall Z \in RS(S).$$
If $\alpha(Y_\star)=g$ we can directly apply Lemma \ref{inductivestep} with $T=(1,1)$ to conclude that for $m$ large enough, $S^nT^m$ is a dominant word,  and $[0;S^nT^m]$ is arbitrarily close to $[0; \overline{S}]$ for sufficiently big $n$.

If not, then $g\neq\alpha(Y_\star) = [0; \overline{P_0}]$ with $|P_0|$ even. By Lemma \ref{prefsuffisodd}, $Y_\star$ is an extremal word of  odd length,  and, since $[0; \overline{Y_\star}]$ belongs to the tuning window of root $\alpha(Y_\star)$, by means of the results of Section \ref{tuning} we deduce that  $Y_\star$ has the odd prefix $P_1$ (where $P_1$ denotes the conjugate string of $P_0$, i.e. $[0;P_1]=[0;P_0]$); hence $Y_\star < <  P_0$, and a fortiori also $S < <  P_0$.

Moreover 
$$|P_0|\leq |Y_\star|+1\leq|S|$$
and actually $|Y_\star|+1<|S|$ because otherwise the first digit of $Y_\star$ would appear twice at the beginning of $S$, contradicting the fact that $S$ is extremal. Since $|P_0|<|S|$, we can use the inductive hypothesis to claim that there is $\gamma=[0;\overline{T}]$ which is dominant and

$$ \alpha(Y_\star) <  [0; \overline{T}] < [0; \overline{Z}] \quad \forall Z \in RS(S)$$
and $\gamma$ can also be chosen close enough to $\alpha(Y_\star)$ so that $P_0$ is prefix of $T$, which implies
$$S  < <   T.$$
Again, by Lemma \ref{inductivestep}, $S^nT^m$ is a dominant word for $m$ large enough, of even length if $m$ is even, and arbitrarily close to $[0; \overline{S}]$ 
as $n$ tends to infinity.
\qed

\section{Fractal dimension} \label{dimension}

\subsection{Continuity of Hausdorff dimension} \label{continuity}

Let us now prove the first part of Theorem \ref{main}, namely the continuity of the Hausdorff dimension of $\mathcal{B}(t)$. 

The continuity of the monotone map $t \mapsto \textup{H.dim } \BB(t)$ at $t=0$ follows immediately from equation \eqref{eq:zero}.
On the other hand, the continuity at any point $t_0>0$ can be obtained using a result of Raith.
Recall that a piecewise monotonic, piecewise $C^1$ map $T: [0,1] \to [0,1]$ is \emph{expanding} if there exists $n \geq 1$ such that 
$\inf |(T^n)'(x)| > 1$, where the inf is taken over all points $x$ where the derivative is defined.

Theorem 2 of \cite{Ra}  in the simplest case (i.e. setting $K=1$ in \cite[Thm 2]{Ra}, see also \cite{Ra1}) states:

\begin{theorem}\label{th:Ra}
Let $T:[0,1]\to [0,1]$ be an expanding piecewise monotonic map, and $Q$ be an open interval. Then for every $\epsilon>0$ there exists $\delta>0$ such that if $\widetilde{Q}$ is another interval $\delta$-close to $Q$ then
$$| \textup{H.dim } R(\widetilde{Q}) - \textup{H.dim } R({Q})|< \epsilon$$
where $R(Q):=\{x: T^k(x) \notin Q \ \ \forall k \geq 0\}$. 
\end{theorem}

So, in order to prove that the map $t \mapsto \textup{H.dim } \BB(t)$ is continuous at a point $t_0\in (0, 1/2)$ we proceed as follows:
\begin{enumerate}
\item Note that equation \eqref{eq:BBF} allows us to write $\BB(t)$ in terms of the Farey map
as 
$$\BB(t) = \{ x \in [0, 1] \ : \ F^k(x) \geq t \ \ \forall k \in \NN \}.$$ 
\item Let us call $T$ the even map obtained by ``rectifying" the graph of $F$ near the endpoints 0 and 1: namely fix $\delta = \frac{t_0/2}{1+t_0/2}$ and let  $T$ be defined as 
$$T(x) := \left\{ \begin{array}{ll} 
\frac{x}{1-\delta} & \textup{if }0 \leq x \leq \delta \\
F(x) & \textup{if } \delta < x \leq 1 - \delta \\
\frac{1-x}{1 - \delta} &  \textup{if } 1- \delta < x \leq 1.
\end{array} \right.
$$
The map $T$ is continuous and expanding (even if $F$ is not). Moreover for all $t \in (t_0/2, 1/2)$ we have that
\begin{equation}\label{eq:BB=BBT}\begin{split}
\widehat{\BB}(t)&:=  \{ x \in [0, 1] \ : \ T^k(x) \geq t \ \ \forall k \in \NN \}=\\
 &=\{ x \in [0, 1] \ : \ F^k(x) \geq t \ \ \forall k \in \NN \} =\BB(t); 
 \end{split} 
\end{equation}
this is so because for $t \in (t_0/2, 1/2)$ the set  $\BB(t)$ (and hence also $\widehat{\BB}(t)$) is contained in the interval $ [t,1/(1+t)]\subset [\delta, 1-\delta]$, where the maps $F$ and $T$ coincide.
\item We can now apply Theorem \ref{th:Ra} to the map $T$ defined above using $Q=(0,t_0)$ and $\widetilde{Q} = (0,t)$ and we get that for all $\epsilon>0$ there is $\delta>0$ such that if $|t-t_0|<\delta $ then $| \textup{H.dim } \widehat{\BB}(t) - \textup{H.dim } \widehat{\BB}(t_0)|< \epsilon$.
\end{enumerate}
This concludes the proof of Theorem \ref{main}(1). \qed

\subsection{The local dimension of $\mathcal{E}$} \label{S:local}

We are now ready to prove Theorem \ref{main}(2), namely that, for each $t \in [0, 1]$, 
$$\textup{H.dim }\BB(t) = \textup{H.dim }\EE(t)$$
where $\EE(t):= \EE \cap [t,1]$. 
Let us recall that by definition $\EE(x) \subseteq \BB(x)$, hence one inequality is trivial.
Moreover, since both functions $x \mapsto \BB(x)$ and $x \mapsto \EE(x)$ are locally constant
around any $x$ which does not belong to $\mathcal{E}$, it is enough to prove that
\begin{equation} \label{equalityonE}
\textup{H.dim }\BB(x) = \textup{H.dim }\EE(x) \quad \textup{for all }x \in \mathcal{E}.
\end{equation}

We use the following lemma.

\begin{lemma} \label{Lipschitz_tuning}
For each $n \geq 0$, $t > 0$ and $r \in \mathbb{Q} \cap (0,1)$ extremal, the map $\tau^n \tau_r \tau^{-(n+1)} : \tau^{n+1}\BB(t) \rightarrow \tau^n\tau_r \BB(t)$ is 
Lipschitz continuous.  
\end{lemma}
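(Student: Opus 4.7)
The plan is to reduce the Lipschitz estimate to a uniform comparison of continued-fraction denominators, using the semigroup property of tuning together with the cylinder estimate \eqref{contraction}. Set $N:=\lfloor 1/t\rfloor$, so every element of $\BB(t)$ has partial quotients bounded by $N$. For distinct $y,y'\in \BB(t)$ I would write $y=S\cdot u$ and $y'=S\cdot u'$ where $S=(a_1,\dots,a_k)$ is their longest common continued fraction prefix and $u,u'\in \BB(t)$ have distinct first partial quotient in $\{1,\dots,N\}$. Iterating the identity $\tau(S\cdot x)=\tau(S)\cdot\tau(x)$, both tuning compositions factor through the string action:
\[
\tau^{n+1}(y)=\tau^{n+1}(S)\cdot \tau^{n+1}(u),\qquad \tau^n\tau_r(y)=\tau^n\tau_r(S)\cdot \tau^n\tau_r(u),
\]
and similarly for $y'$.

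Applying \eqref{contraction} to the M\"obius maps $f_{\tau^{n+1}(S)}$ and $f_{\tau^n\tau_r(S)}$, each of distortion at most $2$, the required Lipschitz bound reduces to proving that the product
\[
\frac{q(\tau^{n+1}(S))^2}{q(\tau^n\tau_r(S))^2}\cdot \frac{|\tau^n\tau_r(u)-\tau^n\tau_r(u')|}{|\tau^{n+1}(u)-\tau^{n+1}(u')|}
\]
is uniformly bounded in $S$ and in $(u,u')$. For the second factor I would argue that the leading partial quotients $(b,b')$ of $u,u'$ range over the finite set $\{1,\dots,N\}^{\,2}$ with $b\ne b'$, and within each such type the numerator and denominator are trapped between positive constants (the gap between the corresponding tuned first-digit cylinders and their diameters); taking the maximum over these finitely many types gives a bound $C_1=C_1(n,r,N)$.

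The heart of the argument is the first factor, namely showing
\[
q(\tau^{n+1}(S))\leq C_2(n,r,N)\,q(\tau^n\tau_r(S))
\]
uniformly in $S$ with digits $\le N$. I would exploit the substitution nature of tuning to write both sides as parallel $k$-fold concatenations
\[
\tau^{n+1}(S)=\tau^{n+1}(a_1)\cdots\tau^{n+1}(a_k),\qquad \tau^n\tau_r(S)=\tau^n\tau_r(a_1)\cdots\tau^n\tau_r(a_k).
\]
The key structural input is that every extremal rational in $(0,1)$ satisfies $r\leq 1/2$: any extremal string of odd length $\geq 3$ must begin with a digit $\geq 2$, otherwise the splitting $X=(1)$, $Y=$ rest violates $XY<YX$. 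Consequently the blocks $S_1',S_0'$ defining $\tau_r$ have denominators strictly exceeding those of $S_1=(2),S_0=(1,1)$ defining $\tau=\tau_{1/2}$, and, iterating in $n$, one obtains a digit-level inequality $q(\tau^{n+1}(a))\leq \lambda\, q(\tau^n\tau_r(a))$ with $\lambda=\lambda(n,r,N)<1$ for every $a\leq N$. To propagate this from individual digits to the whole concatenation, I would use the $\mathrm{SL}_2(\mathbb{Z})$-matrix representation of continued-fraction denominators, which expresses $q(A_1\cdots A_k)$ as an entry of a matrix product; a Perron--Frobenius-style estimate then shows that the ratio of the corresponding matrix products is, up to a bounded boundary distortion, a product of $k$ contractive factors, and so is uniformly bounded (in fact tends to $0$ as $k\to\infty$, while for $k$ smaller than some $k_0(n,r,N)$ only finitely many strings $S$ arise and the bound is trivial).

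The hardest step will be this last propagation: a crude application of \eqref{eq:supermult} loses a factor of $2$ at each of the $k-1$ concatenation boundaries, so the raw bound grows as $(2\lambda)^k$ and fails to be uniform when $\lambda>1/2$ (which does happen for $r$ close to $1/2$, e.g.\ $r=1/3$, $a=1$). Overcoming this requires the exact matrix identity rather than the submultiplicative bound, so that the effective boundary error is absorbed into a single multiplicative constant depending only on $n,r,N$ rather than accumulating with $k$. Combined with the $C_1$ bound from the second factor, this yields the desired Lipschitz constant.
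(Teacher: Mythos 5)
Your skeleton is the same as the paper's: split off the longest common prefix $S$, use \eqref{contraction} to reduce the Lipschitz bound to (a) a uniform lower bound on $|\tau^{n+1}(u)-\tau^{n+1}(u')|$ for $u,u'\in\BB(t)$ with distinct leading digits (your finite-type argument is fine and matches the paper's constant $C=C(t,n)$), and (b) the uniform comparison $q(\tau^{n+1}(S))\le C_2\,q(\tau^n\tau_r(S))$. The gap is in (b), and it is exactly at the step you yourself flag as the hardest. A per-digit inequality $q(\tau^{n+1}(a))\le\lambda\,q(\tau^n\tau_r(a))$ does \emph{not} propagate to concatenations, even using exact matrix identities rather than \eqref{eq:supermult}: the obstruction is not the bookkeeping of boundary factors but the fact that blocks with equal (or comparable) denominators can have genuinely different Lyapunov growth rates under repetition. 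Concretely, $q((1,1,1))=q((3))=3$, yet $q((1,1,1)^k)\sim\bigl(\tfrac{1+\sqrt5}{2}\bigr)^{3k}\approx(4.24)^k$ while $q((3)^k)\sim\bigl(\tfrac{3+\sqrt{13}}{2}\bigr)^k\approx(3.30)^k$, so the ratio of the $k$-fold products is unbounded. Hence your proposed ``Perron--Frobenius-style'' propagation would have to compare the two cocycles themselves (operator norm of $M(\tau^{n+1}(a_i))$ on the relevant cone versus the co-norm of $M(\tau^n\tau_r(a_i))$), which is a strictly stronger statement than the per-digit denominator bound you establish; as sketched, with worst-case boundary distortion $2$ per junction against $\lambda=2/3$ (your own example $r=1/3$, $a=1$), the bound still diverges like $(4/3)^k$. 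This step is therefore not proved.

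The paper closes this gap with a combinatorial device you are missing: the partial order $A\le_\star B$ (an order-preserving injection of the digits of $A$ into those of $B$ with $a_i\le b_{K(i)}$), which is preserved under concatenation and under tuning, and which implies $q(A)\le q(B)$ by the classical monotonicity of continuants under increasing or inserting partial quotients. Since $(1,1)\le_\star S_0$ and $(2)\le_\star S_1$ for every extremal $r$ (here your observation that $S_1$ begins with a digit $\ge 2$ is what is really needed, not the denominator count), one gets $\tau(S)\le_\star\tau_r(S)$ and then $\tau^{n+1}(S)\le_\star\tau^n\tau_r(S)$, hence $q(\tau^{n+1}(S))\le q(\tau^n\tau_r(S))$ with constant $1$, uniformly in $S$ and with no propagation argument at all. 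If you replace your matrix-product step by this monotonicity lemma, the rest of your proof goes through.
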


\begin{proof}
Let us define the partial order $\leq_\star$ on the set of finite strings of positive integers: given 
two strings $A = (a_1, \dots, a_n)$ and $B = (b_1, \dots, b_m)$, 
$A \leq_\star B$ if there exists a strictly increasing function $K : \{1, \dots, n\} \mapsto \{1, \dots, m \}$ such that 
for all $1 \leq i \leq n$, $a_i \leq b_{K(i)}$. Notice that $\leq_\star$ behaves well under concatenation ($A \leq_\star B, C \leq_\star D \Rightarrow AC \leq_\star BD$)
and tuning (i.e. $A \leq_\star B \Rightarrow \tau_r(A) \leq_\star \tau_r(B)$ for any $r$). Moreover, if $A \leq_\star B$, then
$q(A) \leq q(B)$. 
Let us now consider $x, y \in \BB(t)$. There exists a string $S$ such that $x = S \cdot x', y = S \cdot y'$ with $x', y'$ without any common 
digit at the beginning of their continued fraction expansion. Notice that since $x', y' \in \BB(t)$, there exists $C > 0$ (which depends on $t$ and $n$)
such that $d(\tau^{n+1}(x'), \tau^{n+1}(y')) \geq C$. Let us denote by $\Sigma_1$ and $\Sigma_2$ the two finite strings 
$\Sigma_1 := \tau^n \tau_r(S)$, $\Sigma_2 := \tau^{n+1}(S)$, and $S_1$, $S_0$ be the even and odd string representing $r$. 
Let us notice that by eq. \eqref{contraction}
$$d(\tau^n \tau_r(x), \tau^n \tau_r(y)) \leq 
\sup |f_{\Sigma_1}'| d(\tau^n\tau_r(x'), \tau^n \tau_r(y')) \leq \frac{1}{q({\Sigma_1})^2},$$
$$d(\tau^{n+1}(x), \tau^{n+1}(y)) \geq 
\inf |f_{\Sigma_2}'| d(\tau^{n+1}(x'), \tau^{n+1} (y')) \geq \frac{C}{4q({\Sigma_2})^2}, $$
hence
$$ d(\tau^n \tau_r(x), \tau^n \tau_r(y)) \leq \frac{4 q(\Sigma_2)^2}{q(\Sigma_1)^2 C} d(\tau^{n+1}(x), \tau^{n+1}(y)).$$ 
Now, since $(1, 1) \leq_\star S_0$ and $(2) \leq_\star S_1$, then by concatenation $\tau(S) \leq_\star \tau_r(S)$, and by tuning
$\Sigma_2 = \tau^{n+1}(S) \leq_\star \tau^n \tau_r (S) = \Sigma_1$, hence $q(\Sigma_2) \leq q(\Sigma_1)$ and the claim follows. 

\end{proof} 

Let now $\tau = \tau_{1/2}$ be the tuning operator corresponding to $\frac{1}{2}$. 
Note that $W_{1/2} = [g^2, g)$ where $g$ is the golden 
mean, and $\tau$ has a unique fixed point $c_{F} \in [g^2, g)$. By the estimates of Section \ref{feigenbaum}, 
$\textup{H.dim }\BB(c_F) = 0$, and \eqref{equalityonE} holds for $x \geq c_F$.

Given any $x \in \mathcal{E} \cap (0, c_F)$, then there exists 
$n \geq 0$ and $y \in \mathcal{E} \setminus [g^2, g)$ such that $x = \tau^n (y)$. 
Now, by Corollary \ref{dimtuned} and the fact $\mathcal{E}(g) = \mathcal{B}(g) = \{g\}$ is a single point, 
\begin{equation} \label{E:tau-dim}
\textup{H.dim }\mathcal{B}(\tau^n(y)) = \textup{H.dim } \tau^n \mathcal{B}(y), 
\end{equation}
\begin{equation}
\textup{H.dim }\mathcal{E}(\tau^n(y)) = \textup{H.dim } \tau^n \mathcal{E}(y).
\end{equation}
We have two cases:
\begin{enumerate}
 \item If $y \in UT$, then by Proposition \ref{Lipschitz} for each $z > y$ there exists a finite string $S$ such that $S \cdot \BB(z) \subseteq \EE(y)$, 
hence by applying $\tau^n$ to both sides
$$\tau^n(S) \cdot \tau^n(\BB(z)) = \tau^n( S \cdot \BB(z)) \subseteq \tau^n(\EE(y)),$$
hence $\tau^n(\EE(y))$ contains a Lipschitz image of $\tau^n(B(z))$ and 
$$ \sup_{z > y} \textup{H.dim }\BB(\tau^n(z)) = \sup_{z > y} \textup{H.dim } \tau^n \BB(z) \leq \textup{H.dim } \tau^n \EE(y)) = \textup{H.dim } \EE(\tau^n (y)).$$
Moreover, by continuity of Hausdorff dimension and of $\tau$,  
$$\sup_{z >y} \textup{H.dim } \BB(\tau^n (z)) = \textup{H.dim }\BB(\tau^n (y))$$
hence $\textup{H.dim }\BB(x) \leq \textup{H.dim }\EE(x)$ and \eqref{equalityonE} follows.
\item If $y \notin UT$, then $y = \tau_r(z)$ for some extremal rational $r < g^2$ and some $z \in [0, 1] \setminus \mathbb{Q}$.
Now, applying the tuning operator to both sides of formula 1. from Proposition \ref{incl_tuned} 
\begin{equation} \label{E:tuning1}
\textup{H.dim }\tau^n \EE(\tau_r(z))= \max \{ \textup{H.dim } \tau^n \tau_r \EE(z), \textup{H.dim }\tau^n \EE(\alpha_r) \},
\end{equation}
where $\alpha_r$ is the root of the tuning window $W_r$ relative to $\tau_r$, and the same formula with $\mathcal{E}$ replaced by $\mathcal{B}$ holds
by formula 2. of Proposition \ref{incl_tuned}. Now, since $\alpha_r < g^2$,
$$\tau^n \EE(\alpha_r) \supseteq \tau^n \EE(g^2) = \tau^n \EE(\tau(0)) \supseteq \tau^{n+1} \EE,$$
while 
$$\tau^n\tau_r \EE(z) \subseteq \tau^n \tau_r \EE,$$
and since the map $\tau^n \tau_r \tau^{-(n+1)}$ restricted to numbers of bounded type is Lipschitz (Lemma \ref{Lipschitz_tuning}), then 
$\textup{H.dim } \tau^n \tau_r \EE \leq \textup{H.dim }\tau^{n+1}\EE$.
Therefore
$$\textup{H.dim } \tau^n\tau_r \EE(z) \leq \textup{H.dim }\tau ^n \tau_r\EE \leq \textup{H.dim } \tau^{n+1} \EE \leq \textup{H.dim }\tau^n\EE(\alpha_r),$$
hence $\textup{H.dim }\tau^n \EE(\tau_r (z)) = \textup{H.dim } \tau^n\EE(\alpha_r)$ and similarly 
\begin{equation} \label{E:tuned-dim}
\textup{H.dim }\tau^n \BB(\tau_r (z)) = \textup{H.dim } \tau^n\BB(\alpha_r).
\end{equation}
Since $\alpha_r \in UT$, by case 1. we obtain
$\textup{H.dim }\EE(\tau^n (\alpha_r)) = \textup{H.dim } \BB(\tau^n (\alpha_r))$, hence
\begin{align*}
\textup{H.dim }\EE(\tau^n \tau_r (z)) & = \textup{H.dim }\tau^n \EE(\tau_r (z))  = \textup{H.dim } \tau^n \EE(\alpha_r) \\
& = \textup{H.dim } \EE(\tau^n (\alpha_r))  = \textup{H.dim } \BB(\tau^n (\alpha_r)) \\ 
& = \textup{H.dim } \tau^n \BB(\alpha_r)  = \textup{H.dim } \tau^n \BB(\tau_r (z)) \\
& = \textup{H.dim } \BB(\tau^n \tau_r (z)),
\end{align*}
which completes the proof of Theorem \ref{main}(2).
\end{enumerate}
\qed 

Now let us turn to the proof of Theorem \ref{main} (3). 
Let $r \in \mathbb{Q}_E \cap (0, c_F)$. Then there exists $n \geq 0$ such that $r = \tau^n(s)$ with $s < g^2$ extremal. 
Hence, by eq. \eqref{E:tau-dim} and \eqref{E:tuned-dim}, for any $z \in \EE$ one has 
$$\textup{H.dim }\BB(\tau^n \tau_s (z)) = \textup{H.dim }\tau^n \BB(\tau_s (z)) = \textup{H.dim } \tau^n \BB(\alpha_s) = \textup{H.dim } \BB(\tau^n (\alpha_s)).$$
Since $\tau^n \tau_s(z) = \tau_r(z)$ and $\tau^n(\alpha_s) = \alpha_r$, the above equation implies 
$$\textup{H.dim }\BB(\tau_r (z)) = \textup{H.dim } \BB(\alpha_r)$$
for any $z \in \mathcal{E}$, hence the Hausdorff dimension of $\BB(t)$ is constant as $t \in W_r$. This concludes the proof of Theorem \ref{main}.

\subsection{A problem about Sturmian sequences} \label{Cassaigne}

Let $\alpha \in (0, 1) \setminus\QQ$. Then a  {\it Sturmian sequence} of slope $\alpha$ is a binary sequence of the type
\begin{equation}\label{eq:sturm}
 S_{\alpha, \beta}= \{ \lfloor \alpha (n+1)+\beta \rfloor- \lfloor \alpha n+\beta \rfloor \ : \ n \in \NN \}
\end{equation}
or 
\begin{equation}
S_{\alpha, \beta}= \{ \lceil \alpha (n+1)+\beta \rceil- \lceil \alpha n+\beta \rceil \ : \ n \in \NN \}
\end{equation}
where $\beta$ is some real value. Sturmian sequences have also a geometrical interpretation: they can be 
 viewed as cutting sequences of half-lines on the plane with respect to the integral lattice $\ZZ^2$.

Given a sequence $X$ (finite or infinite) and a positive integer $m$, the set of $m$-factors of $X$ 
is the set of substrings of $X$ of length $m$:
$$\FF_m(X):=\{S = (x_{n+1}, \dots, x_{n+m})\ :  0 \leq n < |X|-m+1  \}.$$

The {\it recurrence function} of a binary sequence $X\in \{0,1\}^\NN$
is the function $R_X: \NN \to \NN\cup\{+\infty\}$ defined
by\footnote{We follow the usual convention $\inf \emptyset= +\infty$.}
$$R_X(n):=\inf\{ m \in \NN \ : \ \forall S\in \FF_m(X), \ \ \FF_n(S)=\FF_n(X)\}$$ 
while the {\it recurrence quotient} of $X$ is the maximal linear growth rate of $R_X(n)$:

$$R_X:=\limsup_{n \to +\infty}\frac{R_X(n)}{n}.$$

It is well known that the recurrence quotient of a Sturmian sequence $S_{\alpha, \beta}$
depends only on the continued fraction expansion of its slope $\alpha=[a_0;a_1,a_2,...]$.
In fact, the following formula holds (\cite{Ca}, Corollary 1)
$$R_{S_{\alpha, \beta}} = \rho(\alpha) := 2+\limsup_{k\to +\infty} [a_k;
  a_{k-1},a_{k-2},...,a_1].$$

So, if $\limsup a_k = N$, then $\rho(\alpha)\in (N+2,N+3)$; if otherwise $\alpha$ has
unbounded partial quotients, then $\rho(\alpha)=+\infty$.
The {\it recurrence spectrum of Sturmian sequences} is defined
by $$\RRR:=\{\rho(\alpha) , \ \alpha \in \RR\setminus \QQ\};$$ the
main result of Cassaigne is a characterization of $\RRR$ that, in
terms of $\EE$, can be written as follows (c.f. \cite{Ca}, Theorem 1):
\begin{equation} \label{Cassaigne_equal}
\RRR= \left\{2+\frac{1}{x} \ : \ x\in \EE \right\}.
\end{equation}

Cassaigne \cite{Ca} posed the question of determining the Hausdorff dimension of the sections
$\RRR \cap [N+2, N+3]$ for each positive integer $N$: the answer to this question is a corollary of Theorem \ref{main}, in fact:
\begin{align*}
\textup{H.dim } \RRR\cap [N+2,N+3] & = 
\textup{H.dim } \EE \cap \left[ \frac{1}{N+1},\frac{1}{N} \right] = \\
& = \textup{H.dim }\EE\left(\frac{1}{N+1}\right)= \\
& = \textup{H.dim } \BB_N.
\end{align*}
Let us recall that a precise asymptotic estimate \cite{He3} is also available for
$\textup{H.dim }  \BB_N$ (and hence for $\textup{H.dim } \RRR\cap [N+2,N+3]$), namely
\begin{equation}\label{eq:zero}
\textup{H.dim } \BB_N=1- \frac{6}{\pi^2 N} - \frac{ 72 \log N}{\pi^4 N^2} +O\left(\frac{1}{N^2}\right) 
\end{equation}
as $N \to +\infty.$ Higher order terms in the asymptotic expansion have been obtained recently in \cite{DFSU}.

\end{document}